\newcommand{\SM}[1]{{\bf [Zhou, 2011: #1]}}
\long\def\delete#1{}
\newtheorem{theorem}{Theorem}
\newtheorem{lemma}[theorem]{Lemma}
\newtheorem{definition}[theorem]{Definition}
\newtheorem{example}[theorem]{Example}
\newtheorem{remark}[theorem]{Remark}
\newcommand{\bmat}[1]{\begin{bmatrix}#1\end{bmatrix}}
\newcommand{\dmat}[1]{\begin{vmatrix}#1\end{vmatrix}}
\newcommand{\be}{\begin{equation}}
\newcommand{\ee}{\end{equation}}
\newcommand{\bea}{\begin{eqnarray}}
\newcommand{\eea}{\end{eqnarray}}
\newcommand{\bean}{\begin{eqnarray*}}
\newcommand{\eean}{\end{eqnarray*}}
\def\non{\nonumber}
\def\qed{\hfill$\Box$\vspace{12pt}}
\def\la{\langle}
\def\ra{\rangle}
\def\FFF{\Bbb F}
\def\FFF{\mathbb{F}}
\def\BB{{\cal B}}
\def\CC{{\cal C}}
\def\DD{{\cal D}}
\def\b0{{\bf 0}}
\def\bc{{\bf c}}
\def\be{{\bf e}}
\def\bu{{\bf u}}
\def\bv{{\bf v}}
\def\Ga{\Gamma}
\def\Si{\Sigma}
\def\Om{\Omega}
\def\a{\alpha}
\def\b{\beta}
\def\l{\lambda}
\def\s{\sigma}
\def\t{\tau}
\def\Aut{{\rm Aut}}
\def\PSL{{\rm PSL}}
\def\PSU{{\rm PSU}}
\def\PGU{{\rm PGU}}
\def\SL{{\rm SL}}
\def\GL{{\rm GL}}
\def\GU{{\rm GU}}
\def\SU{{\rm SU}}
\def\AG{{\rm AG}}
\def\PG{{\rm PG}}
\def\AGammaL{{\rm A\Gamma L}}
\def\PGammaL{{\rm P\Gamma L}}
\def\PGammaU{{\rm P\Gamma U}}
\def\Sp{{\rm Sp}}
\title{Unitary graphs and classification of a family of symmetric graphs with complete quotients}
\author{Massimo Giulietti$^1$, Stefano Marcugini$^1$, Fernanda Pambianco$^1$\\ and Sanming Zhou$^2$\\
$^1$Dipartimento di Matematica e Informatica\\ 
Universit\`a di Perugia, 06128 Perugia, Italy\\ 
Emails:  \{giuliet, gino, fernanda\}@dipmat.unipg.it \\
\\
$^2$Department of Mathematics and Statistics\\
The University of Melbourne, VIC 3010, Australia\\
Email: smzhou@ms.unimelb.edu.au\\}
\date{18/12/2012}
\begin{document}
\openup 0.8\jot
\maketitle


\begin{abstract}
A finite graph $\Ga$ is called $G$-symmetric if $G$ is a group of automorphisms of $\Ga$ which is transitive on the set of ordered pairs of adjacent vertices of $\Ga$. We study a family of symmetric graphs, called the unitary graphs, whose vertices are flags of the Hermitian unital and whose adjacency relations are determined by certain elements of the underlying finite fields. Such graphs admit the unitary groups as groups of automorphisms, and they play a significant role in the classification of a family of symmetric graphs with complete quotients such that an associated incidence structure is a doubly point-transitive linear space. We give this classification in the paper and also investigate combinatorial properties of the unitary graphs.   

{\em Key words}: Symmetric graph, arc-transitive graph, unitary group, Hermitian unital, linear space, unitary graph
\end{abstract}

\section{Introduction}

This paper was motivated by our interest of classifying a family of symmetric graphs with complete quotients such that a certain design involved is a doubly point-transitive linear space. It is known that for such a linear space the group involved is almost simple or contains a regular normal subgroup which is elementary abelian. We handle the almost simple case in this paper. It turns out that the most interesting graphs arisen from this classification are what we call the unitary graphs. Their vertices are the flags of the Hermitian unital and their adjacency relations are determined by certain elements of $\FFF_{q^2}^*$ (see the next section for precise definition). The groups involved in the unitary graphs are the unitary groups between $\PGU(3,q)$ and $\PGammaU(3, q)$.  

Let $G$ be a finite group and $\Ga$ a finite graph with vertex set $V(\Ga)$. Suppose $G$ acts on $V(\Ga)$ as a group of automorphisms of $\Ga$, that is, $G$ preserves the adjacency and non-adjacnecy relations of $\Ga$. If $G$ is transitive on $V(\Ga)$ and, in its induced action, transitive on the set of arcs of $\Ga$, then $\Ga$ is said to be {\em $G$-symmetric}, where an {\em arc} is an ordered pair of adjacent vertices. A $G$-symmetric graph is also called {\em $G$-arc transitive} in the literature. There is an extensive literature on symmetric and highly arc-transitive graphs beginning with \cite{Tutte}. The reader is referred to two useful surveys \cite{Praeger97, Praeger00} in this area. 

For a $G$-symmetric graph $\Ga$, if $V(\Ga)$ admits a nontrivial $G$-invariant partition $\BB = \{B, C, \ldots\}$, that is, $1 < |B| < |V(\Ga)|$ and any element of $G$ maps blocks of $\BB$ to blocks of $\BB$, then we call $\Ga$ an imprimitive $G$-symmetric graph. In this case the {\em quotient graph} $\Ga_{\BB}$ of $\Ga$ relative to $\BB$ is
defined to be the graph with vertex set $\BB$ in which $B, C \in
\BB$ are adjacent if and only if there exists at least one edge of $\Ga$ between $B$ and $C$. We assume without explicit mentioning that $\Ga_{\BB}$ has at least one edge. Since $\Ga$ is $G$-symmetric and $\BB$ is $G$-invariant, this implies that each block of $\BB$ is an independent set of $\Ga$. Denote by $\Ga(\a)$ the neighbourhood of $\a \in V(\Ga)$ in $\Ga$ and set $\Ga(B) = \cup_{\a \in
B}\Ga(\a)$. For $C \in \BB$ adjacent to $B$ in $\Ga_{\BB}$, we call $m = |\{D \in \BB: \Ga(D) \cap B = \Ga(C) \cap B\}|$ the {\em multiplicity} of $\BB$. Since $\Ga$ is $G$-symmetric and $\BB$ is $G$-invariant, $|B|$, $|\Ga(C) \cap B|$ and $m$ are all independent of the choice of $B$ and $C$. If $|\Ga(C) \cap B|=|B|$ or $|\Ga(C) \cap B|=|B|-1$, then $\Ga$ is called a {\em multicover} (e.g.~\cite{LPVZ}) or {\em almost multicover} of $\Ga_{\BB}$ respectively; if in addition the edges between $B$ and $C$ form a matching, then $\Ga$ is called a {\em cover} or {\em almost cover} \cite{Zhou98} of $\Ga_{\BB}$ respectively.     

A natural incidence structure $\DD(\Ga, \BB)$ \cite{Zhou-EJC} arises when $\Ga$ is an almost multicover of $\Ga_{\BB}$. Its points are the blocks of $\BB$ and its blocks are the images of $\BB(\a) \cup \{B\}$ under the action of $G$, where $\a \in B$ is fixed and $\BB(\a) = \{C \in \BB: \Ga(C) \cap B = B \setminus \{\a\}\}$. The incidence relation of $\DD(\Ga, \BB)$ is the set-theoretic inclusion. In general, $\DD(\Ga, \BB)$ is a 1-design of block size $m+1$ \cite[Lemma 2.2]{Zhou-EJC}. In the special case when $\Ga_{\BB}$ is a complete graph, $\DD(\Ga, \BB)$ is a $2$-design that admits $G$ as a doubly point-transitive and block-transitive group of automorphisms. A program set up in \cite{Zhou-EJC} is to classify all possible $\Ga$ in the special case when $\Ga_{\BB}$ is complete and this 2-design is a linear space. (A {\em linear space} \cite{Beth-Jung-Lenz} is an incidence structure of points and lines such that any point is incident with at least two lines, any line with at least two points, and any two points are incident with exactly one line.) When this linear space is trivial (that is, each line is incident with exactly two points), all graphs are classified in \cite[Theorem 3.19]{Zhou-EJC} and interesting graphs arise, including the cross ratio graphs \cite{Gardiner-Praeger-Zhou99, Zhou-EJC} from finite projective lines. 

All nontrivial doubly point-transitive linear spaces are known \cite{Kantor85} and the corresponding group $G$ is {\em almost simple} (that is, $G$ has a nonabelian simple normal subgroup $N$ such that $N \unlhd G \le \Aut(N)$) or contains a regular normal subgroup which is elementary abelian. In this paper we classify $(\Ga, G, \BB)$ such that $\Ga_{\BB}$ is complete and almost multi-covered by $\Ga$, $\DD(\Ga, \BB)$ is a nontrivial linear space and $G$ is almost simple. The most interesting graphs arisen from this classification are the unitary graphs; see Definition \ref{def:ugraph}. Let $\Ga^+(P; d, q)$ ($\Ga^{\simeq}(P; d, q)$, respectively) be the graph \cite{Zhou-EJC} with vertices the point-line flags of $\PG(d-1, q)$ such that two such flags $(\s, L), (\t, N)$ are adjacent if and only if $L, N$ are intersecting (skew, respectively) in $\PG(d-1, q)$. The following is the main result in this paper. 

\begin{theorem}
\label{thm:class}
Suppose $\Ga$ is a $G$-symmetric graph admitting a nontrivial $G$-invariant 
partition $\BB$ of block size at least 3 such that $\Ga_{\BB}$ is a complete graph, $\Ga$ is an almost multicover of $\Ga_{\BB}$ and $\DD(\Ga, \BB)$ is a nontrivial linear space. Suppose further that $G$ is almost simple. Then one of the following occurs:
\begin{itemize}
\item[\rm (a)] $\Ga$ is isomorphic to $\Ga^+(P; d, q)$ or $\Ga^{\simeq}(P; d, q)$, and $\PSL(d, q) \unlhd G \le \PGammaL(d, q)$, for some integer $d \ge 3$ and prime power $q$;
\item[\rm (b)] $\Ga$ is isomorphic to a unitary graph $\Ga_{r,\l}(q)$ and $\PGU(3, q) \unlhd G \le \PGammaU(3, q)$, for a prime power $q > 2$ and appropriate $r, \l$;
\item[\rm (c)] $\Ga$ is isomorphic to one of the four graphs whose vertices are the flags of $\PG(3,2)$, and $G = A_7$; these graphs have order $105$ and (valency, diameter, girth) = $(24, 2, 3)$, $(24, 3, 3)$, $(12, 3, 4)$, $(12, 3, 3)$ respectively.
\end{itemize}
\end{theorem}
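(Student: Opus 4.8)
The strategy is to exploit the full force of the classification of nontrivial doubly point-transitive linear spaces together with the combinatorial constraints imposed by the almost-multicover hypothesis. First I would observe that, since $\Ga_{\BB}$ is complete and $\Ga$ is an almost multicover of $\Ga_{\BB}$ with $|B| \ge 3$, the incidence structure $\DD(\Ga, \BB)$ is a $2$-design admitting $G$ as a doubly point-transitive, block-transitive group of automorphisms; by hypothesis it is moreover a nontrivial linear space. The point set of $\DD(\Ga, \BB)$ is $\BB$, so $G$ acts doubly transitively on $\BB$; since $G$ is almost simple, one invokes the classification of almost simple doubly transitive permutation groups and the list (due to Kantor) of nontrivial doubly point-transitive linear spaces. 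This yields a short list of candidate pairs $(\soc(G), \BB)$: essentially the projective spaces $\PG(d-1,q)$ with $G$ between $\PSL(d,q)$ and $\PGammaL(d,q)$, the Hermitian unitals with $G$ between $\PGU(3,q)$ and $\PGammaU(3,q)$, and a handful of sporadic-type exceptions, the relevant one here being $A_7$ acting on $15$ points (the points of $\PG(3,2)$) with lines a set of $7$-point blocks.

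The next step is, for each surviving case, to reconstruct $V(\Ga)$ and the adjacency relation from $(G,\BB)$ and the design $\DD(\Ga,\BB)$. The key structural fact is that a block $B \in \BB$, as a set of size $|B|$, carries a $G_B$-action, and each neighbour class $C$ of $B$ in $\Ga_{\BB}$ determines the subset $\Ga(C) \cap B = B \setminus \{\a\}$ for a unique $\a \in B$; this gives a $G_B$-equivariant identification between the set $\BB(\a)$ of lines of $\DD(\Ga,\BB)$ through the ``point at infinity'' $\a$ and certain configurations, so that a vertex of $\Ga$ in block $B$ is pinned down by a point $\a$ of $B$ together with the line-structure data. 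Concretely, in case (b) the block $B$ will be identified with a block (line) of the Hermitian unital, the distinguished point $\a$ with a point on it, so that $V(\Ga)$ is the flag set of the unital; the adjacency rule between two flags $(\s,L)$, $(\t,N)$ then has to be $G$-invariant and determined by the relative position of the two flags, which after coordinatising with $\FFF_{q^2}$ is governed by an element of $\FFF_{q^2}^*$ — this is exactly the definition of $\Ga_{r,\l}(q)$ in Definition \ref{def:ugraph}. Analogously case (a) produces the flag graphs $\Ga^+(P;d,q)$ and $\Ga^{\simeq}(P;d,q)$ of $\PG(d-1,q)$, and case (c) the four flag graphs of $\PG(3,2)$ under $A_7$, whose parameters can be computed directly since the graphs are explicit and small.

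The main obstacle I anticipate is the reconstruction step rather than the application of the classification: one must show that the abstract data $(\Ga,G,\BB)$ is \emph{forced} to coincide with one of the named graphs, i.e.\ that the $G$-orbit on arcs of $\Ga$ — equivalently, the $G_{\a}$-orbit on $\Ga(\a)$ for a fixed vertex $\a$ — is uniquely determined (up to the listed choices of parameters $r,\l$) by the requirement that the resulting $\DD(\Ga,\BB)$ be the given linear space. This requires understanding the suborbits of $G$ on the flag set in each case and checking which suborbits give rise, via the $\BB(\a)$-construction, to the prescribed line set; in the unitary case this is where the delicate field arithmetic over $\FFF_{q^2}$ enters and where the conditions on $r$ and $\l$ (ensuring the relation is symmetric and yields a connected graph with the right design) must be pinned down. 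The exceptional case $A_7$ on $\PG(3,2)$ and the need to separate genuinely the ``intersecting'' versus ``skew'' versions in case (a) will require some case-by-case bookkeeping, but no conceptual difficulty beyond that. Finally one verifies that each graph on the list does satisfy all the hypotheses, so that the classification is exact.
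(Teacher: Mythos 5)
Your overall route is the same as the paper's: observe that $\DD(\Ga,\BB)$ is a $G$-doubly point-transitive nontrivial linear space, invoke Kantor's classification, and then reconstruct $\Ga$ case by case (the paper does the reconstruction via the flag-graph machinery $\Ga \cong \Ga(\DD,\Om,\Psi)$ of \cite{Zhou-EJC}, which is essentially the identification you describe). However, there are concrete gaps. The most serious is that your list of cases from Kantor's theorem omits the Ree unital $U_R(q)$ with socle $^2G_2(q)$, $q=3^{2s+1}$. This is a genuine entry in the classification of nontrivial doubly point-transitive linear spaces, and ruling it out is a necessary and nontrivial step: the paper shows that condition (A4) (two-point stabilizer transitive on the remaining flags at a point) fails because $|G_{\s\t}|=(q-1)(2s'+1)$ while transitivity would force $q^2-1$ to divide this, i.e.\ $q+1\mid 2s'+1$, impossible since $q+1$ is even and $2s'+1$ is odd. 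Without this argument your classification could have a fourth family and the theorem as stated would not follow. A second, smaller gap of the same kind: Kantor's theorem only gives $\PSU(3,q)\unlhd G\le \PGammaU(3,q)$ in the unital case, whereas conclusion (b) asserts $\PGU(3,q)\unlhd G$; the paper closes this by showing (Lemma \ref{lem:fbty}(b)) that when $3\mid q+1$ the flag set is \emph{not} feasible for $\PSU(3,q)$, because $\PSU(3,q)_{\infty,0}$ is not transitive on the lines through $0$ missing $\infty$. You assert the $\PGU$--$\PGammaU$ range without justifying this exclusion.

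Two further points. Your identification in case (b) is reversed: the blocks of $\BB$ must correspond to \emph{points} of $U_H(q)$ (the point set of $\DD(\Ga,\BB)$ is $\BB$, and $U_H(q)$ has $q^3+1$ points), with the vertices inside a block being the flags having that point-entry; identifying a block with a line of the unital would give the partition $\CC$ of Section \ref{sec:2nd}, whose quotient is not complete, contradicting the hypothesis that $\Ga_{\BB}$ is complete. Finally, the heart of the unitary case --- determining all self-paired compatible $G$-orbitals on the flag set and showing they are exactly the $\Ga_{r,\l}(q)$, i.e.\ Theorem \ref{thm:char}(b) with its analysis of the elements $\phi$ interchanging $(\infty,L)$ and $(0,N)$ and the condition $\l^{p^{kr}}=\l^q$ --- is only flagged as ``delicate field arithmetic'' in your plan; as written, the proposal does not supply the argument that forces the adjacency rule to be one of the listed graphs, which is the bulk of the paper's proof.
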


The unitary graphs $\Ga_{r,\l}(q)$ in (b) will be defined in Definition \ref{def:ugraph}; they are the main objects of study in this paper. The four graphs in (c) will be described in the proof of Theorem \ref{thm:class} in Section \ref{sec:class'n}.

Theorem \ref{thm:class} relies on the classification \cite{Kantor85} of doubly point-transitive linear spaces (which relies on the classification of finite simple groups) and the flag graph construction introduced in \cite{Zhou-EJC}. A major part of the proof of Theorem \ref{thm:class} is to analyze the unitary graphs. This will be carried out in Section \ref{sec:char}. As we will see later, the vertex set of a unitary graph admits two natural partitions such that one of the corresponding quotient graphs is a complete graph and the other one is not. It seems that the second quotient is interesting, and we will study its combinatorial properties in Section \ref{sec:2nd}. 

In a recent paper \cite{Zhou12} the fourth-named author used unitary graphs to obtain a lower bound on the largest number of vertices in a symmetric graph with diameter two and degree $q(q^2 - 1)$ for a prime power $q > 2$.  

The reader is referred to \cite{Dixon-Mortimer} and \cite{Beth-Jung-Lenz} for undefined terminology on permutation groups and combinatorial designs respectively. This paper forms part of the fourth author's project of studying imprimitive symmetric graphs; see \cite{Gardiner-Praeger95, IPZ, Li-Praeger-Zhou98, LZ} and \cite{Zhou02}-\cite{Zhou-EJC} for recent progress in this direction.

\section{Definition of the unitary graphs}
\label{sec:ugraph}

Before giving the definition of the unitary graphs, we gather basic results on the unitary groups and the Hermitian unitals. The reader is referred to \cite{O'Nan, Taylor}, \cite[Appendix A]{HKT} and \cite[Section II.8]{HP} for more details. 

Let $q = p^e > 2$ with $p$ a prime. The mapping $\s: x \mapsto x^q$ is an automorphism of the Galois field $\mathbb{F}_{q^2}$, which we will write as $x^q = \bar{x}$ occasionally. The Galois field $\mathbb{F}_{q}$ is then the fixed field of this automorphism. Let $V(3, q^2)$ be a 3-dimensional vector space over $\mathbb{F}_{q^2}$ and $\b: V(3, q^2) \times V(3, q^2) \rightarrow \mathbb{F}_{q^2}$ a nondegenerate $\s$-Hermitian form (that is, $\b$ is sesquilinear such that $\b(a \bu, b \bv) = a b^q \b(\bu, \bv)$ and $\b(\bu, \bv) = \b(\bv, \bu)^q$). The full unitary group $\Gamma U(3, q)$ consists of those semilinear transformations of $V(3, q^2)$ that induce a collineation of $\PG(2, q^2)$ which commutes with $\b$. The general unitary group $\GU(3, q) = \Gamma U(3, q) \cap GL(3, q^2)$ is the group of nonsingular linear transformations of $V(3, q^2)$ leaving $\b$ invariant. The projective unitary group $\PGU(3, q)$ is the quotient group $\GU(3, q)/Z$, where $Z = \{a I: a \in \mathbb{F}_{q^2}, a^{q+1} = 1\}$ is the center of $\GU(3,q)$ and $I$ the identity transformation. The special projective unitary group $\PSU(3, q)$ is the quotient group $\SU(3, q)/(Z \cap \SU(3, q))$, where $\SU(3, q)$ is the subgroup of $\GU(3, q)$ consisting of linear transformations of unit determinant. $\PSU(3, q)$ is equal to $\PGU(3, q)$ if $3$ is not a divisor of $q+1$, and is a subgroup of $\PGU(3, q)$ of index 3 otherwise. It is well known that the automorphism group of $\PSU(3, q)$ is equal to $\PGammaU(3, q) := \PGU(3, q) \rtimes \la \psi \ra$, where $$\psi: x \mapsto x^p, x \in \FFF_{q^2}$$ is the Frobenius map. (We may also view $\psi$ as the element $x' = x^{p}, y' = y^p, z' = z^p$ of $\PGammaU(3,q)$ induced by this map.)

Choosing an appropriate basis for $V(3, q^2)$ allows us to identify vectors of $V(3, q^2)$ with their coordinates and express the corresponding Hermitian matrix of $\b$ by 
$$
D = \bmat{-1 & 0 & 0\\0 & 0 & 1\\0 & 1 & 0}. 
$$
Thus, for $\bu_1 = (x_1, y_1, z_1), \bu_2 = (x_2, y_2, z_2) \in V(3, q^2)$,
$$
\b(\bu_1, \bu_2) = -x_1 x_2^q + y_1 z_2^q + z_1 y_2^q.
$$ 
If $\b(\bu_1, \bu_2) = 0$, then $\bu_1$ and $\bu_2$ are called {\em orthogonal} (with respect to $\b$). A vector $\bu \in V(3, q^2)$ is called {\em isotropic} if it is orthogonal to itself and {\em nonisotropic} otherwise.
Let 
$$
X = \{\la x, y, z \ra: x, y, z \in \FFF_{q^2}, x^{q+1} = y z^q + z y^q\}
$$
be the set of 1-dimensional subspaces of $V(3, q^2)$ spanned by its isotropic vectors. Hereinafter $\la \bu \ra = \la x, y, z \ra$ denotes the 1-dimensional subspace of $V(3, q^2)$ spanned by $\bu = (x, y, z) \in V(3, q^2)$. 
The elements of $X$ are called the {\em absolute points}. It is well known that $|X| = q^3 + 1$, $\PSU(3, q)$ is doubly transitive on $X$, and $\PGammaU(3, q)$ leaves $X$ invariant. Denote 
$$\infty = \la 0, 1, 0 \ra,\;\, 0 = \la 0, 0, 1 \ra.$$
Then $\infty, 0 \in X$. Any point of $X$ other than $\infty$ is of the form $\la x, y, 1 \ra$ and can be viewed as the point $(x, y)$ of $\AG(2, q^2)$ satisfying $x^{q+1} = y + y^q$. 

If $\bu_1$ and $\bu_2$ are isotropic, then the vector subspace $\la \bu_1, \bu_2 \ra$ of $V(3, q^2)$ spanned by them contains exactly $q+1$ absolute points. The {\em Hermitian unital} $U_{H}(q)$ is defined to be the block design with point set $X$ in which a subset of $X$ is a block (called a {\em line}) precisely when it is the set of absolute points contained in some $\la \bu_1, \bu_2 \ra$. It is well known (see \cite{Kantor85, O'Nan, Taylor}) that $U_{H}(q)$ is a linear space with $q^3+1$ points, $q^2(q^2 - q + 1)$ lines, $q+1$ points in each line, and $q^2$ lines on each point. (Any linear space with these parameters is called a {\em unital}.) It was proved in \cite{O'Nan, Taylor} that $\Aut(U_{H}(q)) = \PGammaU(3, q)$. Thus, for every $G$ with $\PSU(3,q) \le G \le \PGammaU(3,q)$, $U_{H}(q)$ is a $G$-doubly point-transitive linear space. (Note that either $G = \PSU(3,q)$ or $G = \PGU(3, q) \rtimes \la \psi^{r} \ra$ for some divisor $r \ge 1$ of $2e$.) This implies that $G$ is also block-transitive and flag-transitive on $U_{H}(q)$, where a {\em flag} is an incident point-line pair.   

A line of $\PG(2, q^2)$ contains either one absolute point or $q+1$ absolute points. In the latter case the set of such $q+1$ absolute points is a line of $U_{H}(q)$; all lines of $U_{H}(q)$ are of this form. So we may represent a line of $U_{H}(q)$ by the homogenous equation of the corresponding line of $\PG(2, q^2)$. 

\begin{lemma}
\label{lem:soln}
Let $\bu_1 = (a_1, b_1, c_1)$ and $\bu_2 = (a_2, b_2, c_2)$ be isotropic vectors of $V(3, q^2)$ such that $\la \bu_1 \ra \ne \la \bu_2 \ra$. Then for each $\mu \in \FFF_{q^2}^*$ there are exactly $q+1$ vectors $\bu_0 \in V(3, q^2)$ such that  
\begin{equation}
\label{eq:rho}
\b(\bu_0, \bu_0) = -\mu^{q+1},\; \b(\bu_0, \bu_1) = 0,\; \b(\bu_0, \bu_2) = 0.
\end{equation} 
Moreover, any two such vectors $\bu_0$ differ by a scalar multiple. 
\end{lemma}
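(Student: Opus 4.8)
The plan is to decouple the two ``orthogonality'' equations of \eqref{eq:rho} from the ``length'' equation: the former confine $\bu_0$ to a single $1$-dimensional subspace, and the latter then reduces to a counting question about the norm map $\FFF_{q^2}^*\to\FFF_q^*$.

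First I would set $W=\la\bu_1,\bu_2\ra$. Since $\la\bu_1\ra\ne\la\bu_2\ra$ we have $\dim W=2$, and since $\b$ is nondegenerate the orthogonal complement $W^{\perp}=\{\bv\in V(3,q^2):\b(\bv,\bu_1)=\b(\bv,\bu_2)=0\}$ has dimension $3-2=1$; write $W^{\perp}=\la\bu_3\ra$ with $\bu_3\ne\b0$. Then the last two equations of \eqref{eq:rho} say exactly that $\bu_0\in\la\bu_3\ra$, i.e.\ $\bu_0=t\bu_3$ for some $t\in\FFF_{q^2}$, and consequently $\b(\bu_0,\bu_0)=t^{q+1}\,\b(\bu_3,\bu_3)$.

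The crucial point, and the only one that needs care, is that $\bu_3$ is nonisotropic. Here I would first note $\b(\bu_1,\bu_2)\ne0$: otherwise, since $\bu_1,\bu_2$ are isotropic, $W$ would be totally isotropic, forcing $W\subseteq W^{\perp}$, which is impossible as $\dim W=2>1=\dim W^{\perp}$. Given $\b(\bu_1,\bu_2)\ne0$, if some nonzero $\bv=s\bu_1+t\bu_2\in W$ also lay in $W^{\perp}$, then $0=\b(\bv,\bu_1)=t\,\b(\bu_2,\bu_1)$ and $0=\b(\bv,\bu_2)=s\,\b(\bu_1,\bu_2)$ would give $s=t=0$, a contradiction; hence $W\cap W^{\perp}=0$, so $V(3,q^2)=W\oplus W^{\perp}$, and since $(W^{\perp})^{\perp}=W$ meets $W^{\perp}$ trivially, $\b$ restricted to $W^{\perp}$ is nondegenerate. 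Therefore $c:=\b(\bu_3,\bu_3)\ne0$, and $c=c^{q}\in\FFF_q^*$. (Alternatively one may simply quote the standard fact that the pole of a secant line of the Hermitian curve is a nonisotropic point.)

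Finally, $\mu^{q+1}\in\FFF_q^*$, so the first equation of \eqref{eq:rho} becomes $t^{q+1}=-\mu^{q+1}c^{-1}$, whose right-hand side lies in $\FFF_q^*$. The norm map $N\colon\FFF_{q^2}^*\to\FFF_q^*$, $N(t)=t^{q+1}$, is a surjective homomorphism whose kernel $\{t:t^{q+1}=1\}$ has order $\gcd(q+1,q^2-1)=q+1$, so every element of $\FFF_q^*$ has exactly $q+1$ preimages. Hence there are exactly $q+1$ admissible values of $t$, all nonzero (the value $t=0$ would give $\b(\bu_0,\bu_0)=0\ne-\mu^{q+1}$), and correspondingly exactly $q+1$ vectors $\bu_0$, each a nonzero scalar multiple of $\bu_3$; in particular any two of them differ by a scalar multiple. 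The main obstacle is thus entirely concentrated in the previous paragraph (ruling out $\bu_3$ isotropic); the rest is the short norm-count just described.
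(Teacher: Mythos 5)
Your proof is correct, and it takes a genuinely different route from the paper's. The paper argues in coordinates: after a case split on which $2\times 2$ minor of the pair $\bu_1,\bu_2$ is nonzero, it solves the two orthogonality equations for $b_0,c_0$ in terms of $a_0$, substitutes into $\b(\bu_0,\bu_0)=-\mu^{q+1}$ to get $a_0^{q+1}=\mu^{q+1}/\eta$ with $\eta\in\FFF_q^*$, and then counts solutions of the norm equation exactly as you do. You instead work coordinate-free: the two orthogonality conditions pin $\bu_0$ to the line $W^\perp$ where $W=\la\bu_1,\bu_2\ra$, and the heart of the matter is your verification that $W^\perp$ is spanned by a nonisotropic vector (via $\b(\bu_1,\bu_2)\ne0$, hence $W\cap W^\perp=0$ and nondegeneracy of $\b$ on $W^\perp$). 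This is a real gain in transparency: the paper's corresponding step is the bare assertion that $\eta\in\FFF_q^*$ (i.e.\ $\eta\ne0$), which is left to the reader and implicitly uses the isotropy of $\bu_1,\bu_2$ and $\la\bu_1\ra\ne\la\bu_2\ra$, whereas your argument isolates and proves exactly this point and dispenses with the case analysis altogether. The paper's computational route, on the other hand, produces the explicit formula (\ref{eq:bc}) for $\bu_0$ in terms of $a_0$, which is mildly convenient later when specific solutions $\bu_0$ are written down, but nothing in the paper's subsequent use of the lemma requires it; both proofs establish the statement in full.
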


\begin{proof}
Since $\la \bu_1 \ra \ne \la \bu_2 \ra$, either $a_1 b_2 - a_2 b_1 \ne 0$, $a_1 c_2 - a_2 c_1 \ne 0$, or $b_1 c_2 - b_2 c_1 \ne 0$. 

Consider the case $b_1 c_2 - b_2 c_1 \ne 0$ first. Denote $\bu_0 = (a_0, b_0, c_0) \in V(3, q^2)$. From $\b(\bu_0, \bu_1) = \b(\bu_0, \bu_2) = 0$ we have
\begin{equation}
\label{eq:bc}
b_0 = a_0(a_2 b_1 - a_1 b_2)^q/(c_2 b_1 - c_1 b_2)^q,\;\, c_0 = a_0(a_2 c_1 - a_1 c_2)^q/(b_2 c_1 - b_1 c_2)^q. 
\end{equation}
Plug these into $\b(\bu_0, \bu_0) = -\mu^{q+1}$ we obtain $a_0^{q+1} = \mu^{q+1}/\eta$ for a certain $\eta \in \FFF^*_{q}$ determined by $\bu_1$ and $\bu_2$. Since $(\mu^{q+1}/\eta)^q = \mu^{q+1}/\eta$, we have $\mu^{q+1}/\eta \in \FFF^*_{q}$. Hence there are exactly $q+1$ elements $a_0 \in \FFF^*_{q^2}$ such that $a_0^{q+1} = \mu^{q+1}/\eta$ and so there are exactly $q+1$ vectors $\bu_0$ satisfying (\ref{eq:rho}). Because of (\ref{eq:bc}) any two such vectors are multiples of each other. 

The case where $a_1 b_2 - a_2 b_1 \ne 0$ or $a_1 c_2 - a_2 c_1 \ne 0$ can be dealt with similarly. 
\qed
\end{proof} 

Define
\begin{equation}
\label{eq:vq}
V(q) = \mbox{the set of flags of}\ \,U_{H}(q).
\end{equation}

\begin{definition}
\label{def:ugraph}
{\em Let $q = p^e > 2$ be a prime power and $r \ge 1$ a divisor of $2e$. Suppose $\l \in \FFF_{q^2}^{*}$ such that $\l^q$ belongs to the $\la \psi^r \ra$-orbit on $\FFF_{q^2}$ containing $\l$. 
The {\em unitary graph} $\Ga_{r,\l}(q)$ is defined to be the graph with vertex set $V(q)$ such that $(\la a_1, b_1, c_1 \ra, L_1)$, $(\la a_2, b_2, c_2 \ra, L_2) \in V(q)$ are adjacent if and only if $L_1$ and $L_2$ are given by:
\begin{equation}
\label{eq:l1}
L_1: \dmat{x & a_1 & a_0 + a_2\\y & b_1 & b_0 + b_2\\z & c_1 & c_0 + c_2} = 0
\end{equation}
\begin{equation}
\label{eq:l2}
L_2: \dmat{x & a_2 & a_0+\l^{qp^{ir}} a_1 \\y & b_2 & b_0 + \l^{qp^{ir}}b_1 \\z & c_2 & c_0 + \l^{qp^{ir}}c_1} = 0
\end{equation}
for an integer $0 \le i < 2e/r$ and a nonisotropic $(a_0, b_0, c_0) \in V(3, q^2)$ orthogonal to both $(a_1, b_1, c_1)$ and $(a_2, b_2, c_2)$.}
\end{definition}

\begin{remark}
{\em (a) The requirement on $(a_0, b_0, c_0)$ is equivalent to that $\bu_0 = (a_0, b_0, c_0)$ satisfies (\ref{eq:rho}) for $\bu_1 = (a_1, b_1, c_1)$ and $\bu_2 = (a_2, b_2, c_2)$, because every element of $\FFF_{q^2}^*$ is of the form $-\mu^{q+1}$ for some $\mu \in \FFF_{q^2}^*$. We will often use this fact in the sequel. 

(b) The requirement on $\l$ is equivalent to that $\l^{p^{tr}} = \l^q$ for at least one $0 \le t < 2e/r$. This ensures that $\Ga_{r,\l}(q)$ is defined as an undirected graph. (However, $\Ga_{r,\l}(q)$ is independent of the choice of such $t$.) In fact, since $r$ is a divisor of $2e$, we have $(j+t)r = 2e$ for some integer $j$ and so $\l = \l^{qp^{jr}}$. Hence the equations of $L_1$ and $L_2$ above can be rewritten as
\begin{equation}
\label{eq:l21}
L_2: \dmat{x & a_2 & \l a_0+\l^{qp^{ir}+1} a_1 \\y & b_2 & \l b_0 + \l^{qp^{ir}+1}b_1 \\z & c_2 & \l c_0 + \l^{qp^{ir}+1}c_1} = 0,\;\;\;
L_1: \dmat{x & \l^{qp^{ir}+1} a_1 & \l a_0+\l^{qp^{jr}} a_2 \\y & \l^{qp^{ir}+1} b_1 & \l b_0 + \l^{qp^{jr}} b_2 \\z & \l^{qp^{ir}+1} c_1 & \l c_0 + \l^{qp^{jr}} c_2} = 0.
\end{equation}
Since $\l \bu_0$ is a solution to (\ref{eq:rho}) with $\mu$ replaced by $\l \mu$, from (\ref{eq:l21}) it follows that the adjacency relation of $\Ga_{r,\l}(q)$ is symmetric and so $\Ga_{r,\l}(q)$ is well-defined as an undirected graph.

(c) $\Ga_{r,\l}(q)$ has $|V(q)| = q^2(q^3 + 1)$ vertices. Its valency is determined by $r$ and $\l$ (for a fixed $q$) as we will see in Theorem \ref{thm:char}.}
\end{remark}

\begin{example}
\label{q3} {\em In the case $q=3$, $r$ can be 1 or 2. If $r=1$, then every $\l \in \FFF^*_{9}$ trivially satisfies $\l^{3^{t}} = \l^3$ for $t=1$ and hence gives rise to the unitary graph $\Ga_{1,\l}(3)$. If $r=2$, then $t=0$, and $\l = \l^3$ holds if and only if $\l = 1$ or $\l = \omega^4 = 2$, where $\omega$ is a primitive element of $\FFF_9$. So $\Ga_{2,1}(3)$ and $\Ga_{2, 2}(3)$ are the only unitary graphs obtained from $r=2$. We will see that each $\Ga_{1,\l}(3)$ is $\PGammaU(3,3)$-symmetric, and $\Ga_{2,1}(3)$ and $\Ga_{2, 2}(3)$ are $\PGU(3,3)$-symmetric. With the help of MAGMA \cite{BCP} we obtain that $\Ga_{1,1}(3) \cong \Ga_{2,1}(3)$, $\Ga_{1,\omega}(3) \cong \Ga_{1,\omega^3}(3)$, $\Ga_{1,\omega^2}(3) \cong \Ga_{1,\omega^6}(3)$, $\Ga_{1, 2}(3) \cong \Ga_{2, 2}(3)$ and $\Ga_{1,\omega^5}(3) \cong \Ga_{1,\omega^7}(3)$, all with order $252$, and they have (valency, diameter, girth) = $(24, 3, 3), (48, 3, 3), (48, 2, 3), (24, 3, 3), (48, 3, 3)$, respectively.

Similarly, if $q=4$, the only graph-group pairs are: $(\Ga_{1,\l}(4), \PGammaU(3,4))$, $(\Ga_{2,\l}(4), \PGU(3,4) \rtimes \la \psi^2 \ra$), $(\Ga_{4,1}(4), \PGU(3,4))$ and $(\Ga_{4,\omega^5}(4), \PGU(3,4))$, where $\l \in \FFF^*_{16}$ and $\omega$ is a primitive element of $\FFF_{16}$.}
\end{example}

\begin{example}
\label{all} {\em For every divisor $r \ge 1$ of $e$, say, $e = tr$, $\l^{p^{tr}} = \l^q$ is trivially satisfied by all $\l \in \FFF^*_{q^2}$ and so $\Ga_{r,\l}(q)$ is well-defined. These graphs are $\PGU(3, q) \rtimes \la \psi^r \ra$-symmetric as we will see later.}
\end{example}

\section{Characterization of the unitary graphs}
\label{sec:char}

As part of the proof of Theorem \ref{thm:class}, in this section we characterize the unitary graphs as a certain family of imprimitive symmetric graphs admitting the unitary groups as groups of automorphisms. This characterization involves the quotient of a unitary graph with respect to the natural partition of its vertex set induced by the points of $U_{H}(q)$. A major tool to be used is the flag graph construction introduced in \cite{Zhou-EJC} which we outline below. 

Let $\DD$ be a $1$-design which admits a point- and block-transitive group 
$G$ of automorphisms. For two points $\s, \t$ of $\DD$ and a line $L$ incident with $\s$, denote by $G_{\s}$ the stabilizer of $\s$ in $G$, by $G_{\s \t}$ the stabilizer of $\s, \t$ in $G$ (subgroup of $G$ fixing each of $\s$ and $\t$), and by $G_{\s, L}$ the stabilizer of the flag $(\s, L)$. For a subset $\Om$ of flags of $\DD$, denote by $\Om(\s)$ the set of flags in $\Om$ with point entry $\s$. A $G$-orbit $\Om$ on the flags of $\DD$ is called {\em feasible with respect to $G$} \cite{Zhou-EJC} if 
\begin{description}
\item[{\rm (A1)}] $|\Om(\s)| \geq 3$; 

\item[{\rm (A2)}] $L \cap N = \{\s\}$, for distinct $(\s, L), (\s, N) \in \Om(\s)$;

\item[{\rm (A3)}] $G_{\s, L}$ is transitive on $L \setminus \{\s\}$, for $(\s, L) \in \Om$; 

\item[{\rm (A4)}] $G_{\s\t}$ is transitive on $\Om(\s) \setminus \{(\s, L)\}$, for 
$(\s, L) \in \Om$ and $\t \in L \setminus \{\s\}$.
\end{description}
Since $G$ is transitive on the points of $\DD$, the validity of these conditions is independent of the choice of $\s$.
Given a feasible $\Om$, a pair of flags $((\s, L), (\t, N)) \in 
\Om \times \Om$ is said to be {\em compatible} \cite{Zhou-EJC} with $\Om$ if 
\begin{description}
\item[{\rm (A5)}] $\s \not \in N$, $\t \not \in L$ but $\s \in N'$, $\t \in L'$
for some $(\s, L'), (\t, N') \in \Om$. 
\end{description}
From (A2) both $(\s, L')$ and $(\t, N')$ are uniquely determined by $((\s, L), (\t, N))$. If $\Psi \subseteq \Om \times \Om$ is a self-paired $G$-orbital of $\Om$ compatible with $\Om$, define \cite{Zhou-EJC} the {\em $G$-flag graph} of $\DD$ with respect to $(\Om, \Psi)$, denoted by $\Ga(\DD, \Om, \Psi)$, to be the graph with vertex set $\Om$ and arc set $\Psi$. It is proved in \cite[Theorem 1.1]{Zhou-EJC} that, for an imprimitive $G$-symmetric graph $(\Ga, \BB)$ with $\BB$ having block size at least $3$, $\Ga$ is an almost multicover of $\Ga_{\BB}$ if and only if $\Ga$ is isomorphic to $\Ga(\DD, \Om, \Psi)$ for a $G$-point-transitive and $G$-block-transitive $1$-design $\DD$. And in this case the block size of $\DD$ is equal to $m+1$ and $\Ga_{\BB}$ has valency $mv$ \cite[Lemma 2.1(a)]{Zhou-EJC}, where $m$ is the multiplicity of $\BB$. In particular, we have:

\begin{lemma}
\label{lem:flag graph}
(\cite[Corollary 2.6]{Zhou-EJC}) Let $s \geq 3$ be an integer and $G$ a finite group. The following statements are equivalent. 
\begin{itemize}
\item[\rm (a)] $\Ga$ is a $G$-symmetric graph admitting a nontrivial $G$-invariant 
partition $\BB$ of block size $s$ such that $\Ga_{\BB}$ is a complete graph and $\Ga$ is an almost multicover of $\Ga_{\BB}$. 
 
\item[\rm (b)] $\Ga$ is isomorphic to $\Ga(\DD,\Om,\Psi)$ for a $G$-doubly point-transitive and $G$-block-transitive $2$-$(v, k, \l)$ design $\DD$ with $(v - 1)/(k - 1) = s$, a feasible $G$-orbit 
$\Om$ on the flags of $\DD$, and a self-paired $G$-orbital $\Psi$ of $\Om$ compatible with $\Om$. 
\end{itemize}
Moreover, $v$ is equal to the number of vertices of $\Ga_{\BB}$, $k - 1$ is equal to the multiplicity of $\BB$, and $G$ is faithful on the vertex set of $\Ga$ if and only if it is faithful on the point set of $\DD$. 
\end{lemma}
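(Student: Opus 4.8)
The plan is to reduce the statement to the already-quoted master theorem \cite[Theorem 1.1]{Zhou-EJC}, which establishes the equivalence between ``$\Ga$ is an almost multicover of $\Ga_{\BB}$'' (for an imprimitive $G$-symmetric $(\Ga,\BB)$ with block size $\ge 3$) and ``$\Ga \cong \Ga(\DD, \Om, \Psi)$ for a $G$-point- and $G$-block-transitive $1$-design $\DD$ together with a feasible $\Om$ and a compatible self-paired $G$-orbital $\Psi$''. What has to be added in the special case at hand is precisely the bookkeeping that turns the bare $1$-design into a $2$-$(v,k,\l)$ design with $(v-1)/(k-1)=s$, and conversely. So the proof splits into showing $(a)\Rightarrow(b)$ and $(b)\Rightarrow(a)$, in each direction invoking \cite[Theorem 1.1]{Zhou-EJC} for the graph-theoretic half and then handling the design-parameter half by hand.

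For the direction $(a)\Rightarrow(b)$: assume $(\Ga,\BB)$ is as in (a), with $\Ga_{\BB}$ complete on $v'$ vertices, where $v' = |V(\Ga)|/s$. By \cite[Theorem 1.1]{Zhou-EJC} there is a $G$-point- and block-transitive $1$-design $\DD$ with $\Ga \cong \Ga(\DD,\Om,\Psi)$, and by the remarks following that theorem the block size of $\DD$ equals $m+1$ where $m$ is the multiplicity of $\BB$, and $\Ga_{\BB}$ has valency $mv$ on $v$ points, $v$ being the number of points of $\DD$. Since $\Ga_{\BB}$ is complete on $v' = v$ vertices its valency is $v-1$, so $mv' = v-1$, i.e. $m = (v-1)/v'$; writing $k = m+1$ this is exactly $(v-1)/(k-1) = v' = s$. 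It remains to upgrade $\DD$ from a $1$-design to a $2$-design. Here I would invoke the general principle recorded in the introduction of the present paper: when $\Ga_{\BB}$ is a complete graph, $\DD(\Ga,\BB)$ is a $2$-design admitting $G$ as a doubly point-transitive and block-transitive group of automorphisms. Concretely, $G$ is transitive on $V(\Ga_{\BB}) = \BB$ and $\Ga_{\BB}$ complete forces $G$ to be $2$-transitive on $\BB$ (the stabilizer $G_B$ of a block $B$ is transitive on the remaining blocks, since $B$ is adjacent to all of them and $\Ga$ is $G$-symmetric); a $1$-design whose automorphism group is $2$-transitive on points is automatically a $2$-design, with $\l$ the common number of blocks through any pair of points. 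Thus $\DD$ is a $2$-$(v,k,\l)$ design with $(v-1)/(k-1)=s$, and the feasibility of $\Om$ and the compatibility and self-pairedness of $\Psi$ come verbatim from \cite[Theorem 1.1]{Zhou-EJC}.

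For the direction $(b)\Rightarrow(a)$: given a $2$-$(v,k,\l)$ design $\DD$ with $(v-1)/(k-1)=s$ on which $G$ acts doubly point- and block-transitively, a feasible $\Om$, and a compatible self-paired $G$-orbital $\Psi$, apply \cite[Theorem 1.1]{Zhou-EJC} in the reverse direction: $\Ga := \Ga(\DD,\Om,\Psi)$ is $G$-symmetric, admits a nontrivial $G$-invariant partition $\BB$ (the partition of $\Om$ by point entry), and is an almost multicover of $\Ga_{\BB}$, with $\BB$ of block size $|\Om(\s)| \ge 3$ by (A1). Two things remain: that the block size is exactly $s$, and that $\Ga_{\BB}$ is complete. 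For the first, by the same remarks the block size of $\DD$ is $m+1$ and $\Ga_{\BB}$ has valency $m v$; but in a $2$-design, $m+1 = k$, hence $m = k-1$, and $\Ga_{\BB}$ has valency $(k-1)v = v(v-1)/s \cdot \frac{s}{v} \cdot$ — more cleanly, one checks $\Ga_{\BB}$ has $v$ vertices (one per point of $\DD$, since the point-entry map $\Om \to \text{points}$ is surjective and $G$-equivariant) and valency $(k-1)v$; wait, that over-counts, so instead observe directly that $\Ga_{\BB}$ is complete: given two points $\s \ne \t$ of $\DD$, double point-transitivity lets us assume $\t$ lies on the line $L$ of a chosen flag $(\s,L) \in \Om$, and then (A3)–(A5) produce an edge of $\Ga$ between the block over $\s$ and the block over $\t$, so every pair of blocks is adjacent in $\Ga_{\BB}$; since $\Ga_{\BB}$ is complete on $v$ vertices, each block has size $|V(\Ga)|/v = |\Om|/v = s$ after using $|\Om| = vs$ (equivalently $|\Om(\s)| = s$), which one reads off from the valency formula $(k-1)v$ for the complete graph $K_v$ forcing $k-1 = (v-1)/s \cdot \frac{1}{1}$, i.e. the identity $(v-1)/(k-1)=s$ is consistent and pins down $|\Om(\s)| = s$. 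The bookkeeping here is the only delicate point; I would organise it by first proving $\Ga_{\BB}$ complete (purely from double transitivity plus feasibility/compatibility) and then reading the block size from $|\Om| = |V(\Ga)|$ and the count $|V(\Ga_{\BB})| = v$.

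The main obstacle I anticipate is not conceptual but the combinatorial bookkeeping in the last paragraph: pinning down that the block size is precisely $s$, as opposed to merely being determined by $\DD$, requires carefully matching the multiplicity $m$ of $\BB$, the block size $k$ of $\DD$, and the valency of $\Ga_{\BB}$, using the identities from \cite[Lemma 2.1]{Zhou-EJC} together with the fact that $\Ga_{\BB}$ is a complete graph on the point set of $\DD$. Once the dictionary ``$m = k-1$, $|V(\Ga_{\BB})| = v$, $\Ga_{\BB} = K_v$ $\Rightarrow$ block size $= s$'' is set up cleanly in one direction, the converse is the same dictionary read backwards, so essentially all the work is in getting that correspondence stated correctly. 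The faithfulness clause is immediate: $G$ acts on $V(\Ga) = \Om$, the point-entry map $\Om \to$ points is $G$-equivariant and surjective, so the kernel of $G$ on $\Om$ is contained in the kernel on points; conversely an element fixing every point fixes every flag since (A2) shows a flag is determined by its point and line, and lines are unions of points — hence the two kernels coincide.
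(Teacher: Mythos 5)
Your overall architecture is the right one, and in fact it is exactly how this statement arises in the source: the paper does not prove the lemma at all but quotes it as \cite[Corollary 2.6]{Zhou-EJC}, which in that reference is deduced from Theorem 1.1 together with Lemma 2.1(a) precisely along the lines you sketch (almost multicover $\Leftrightarrow$ flag graph of a $1$-design, then upgrade to a $2$-design and match parameters). Your $2$-design upgrade via $2$-transitivity, the completeness of $\Ga_{\BB}$ from double point-transitivity of $G$, and the faithfulness clause are all fine. But the quantitative bookkeeping --- the one step you yourself identify as delicate --- is genuinely broken as written, and it stems from a misreading of the quoted valency formula. In the sentence preceding the lemma (and in the notation of \cite{Zhou-EJC}), the ``$v$'' in ``$\Ga_{\BB}$ has valency $mv$'' is the block size of the partition $\BB$, not the number of points of $\DD$: for each $\a\in B$ there are exactly $m$ blocks $C$ with $\Ga(C)\cap B=B\setminus\{\a\}$, so the valency of $\Ga_{\BB}$ is $m\cdot|B|$. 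Reading it your way produces the impossible chain in your (a)$\Rightarrow$(b): you set $v'=v$ (both the number of vertices of $\Ga_{\BB}$) and then derive $m v'=v-1$, i.e.\ $m=(v-1)/v<1$, and finally assert $v'=s$, which contradicts your own definition $v'=|V(\Ga)|/s$. With the correct reading the computation is a one-liner: completeness gives $m\cdot s=v-1$, and the block size of $\DD(\Ga,\BB)$ being $m+1=k$ gives $s=(v-1)/(k-1)$.

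The same confusion makes your (b)$\Rightarrow$(a) determination of the block size circular: you conclude ``each block has size $|\Om|/v=s$ after using $|\Om|=vs$ (equivalently $|\Om(\s)|=s$)'', which assumes what is to be proved, and the appeal to a valency formula $(k-1)v$ for $K_v$ would force $(k-1)v=v-1$, again impossible. The repair is the same dictionary used correctly: apply \cite[Lemma 2.1(a)]{Zhou-EJC} to the pair $(\Ga(\DD,\Om,\Psi),\BB)$ to get valency$(\Ga_{\BB})=m\cdot|\Om(\s)|$ and block size of $\DD(\Ga,\BB)\cong\DD$ equal to $m+1$, hence $m=k-1$; completeness of $\Ga_{\BB}$ (which does follow from $2$-transitivity, though not as you phrased it --- compatibility (A5) forces $\t\notin L$, so do not ``assume $\t$ lies on $L$''; simply map one fixed compatible pair $((\s_0,L_0),(\t_0,N_0))\in\Psi$ onto an arbitrary ordered pair of points by $2$-transitivity) then gives $(k-1)|\Om(\s)|=v-1$, so $|\Om(\s)|=(v-1)/(k-1)=s$. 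Once these two passages are rewritten the proposal is a correct derivation of the cited corollary.
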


The design $\DD$ in (b) corresponding to a given $(\Ga, G, \BB)$ is isomorphic to $\DD(\Ga, \BB)$ (see the introduction for its definition) as shown in the proof of \cite[Theorem 1.1]{Zhou-EJC}.  

We will need Lemma \ref{lem:flag graph} in the proof of Theorem \ref{thm:class}. At present we use it to prove the following characterization of the unitary graphs, which forms part of the proof of Theorem \ref{thm:class}. Denote by $B(\s)$ the set of flags of $U_{H}(q)$ with point-entry $\s$.  Define 
\begin{equation}
\label{eq:B}
\BB = \{B(\s): \s \in X\}, 
\end{equation}
so that $\BB$ is a partition of $V(q)$ with block size $q^2$. Denote by $L(\s \t)$ the unique line of $U_{H}(q)$ through two given points $\s$ and $\t$. For $r$ and $\l$ as in Definition \ref{def:ugraph}, define 
\begin{equation}
\label{eq:k}
k_{r,\l}(q) = \frac{|\la \psi^r \ra|}{|\la \psi^r \ra_{\l}|},
\end{equation}
where $\la \psi^r \ra_{\l}$ is the stabilizer of $\l$ in $\la \psi^r \ra$. Then $k_{r,\l}(q)$ is the size of the $\la \psi^r \ra$-orbit on $\FFF_{q^2}$ containing $\l$. Note that $k_{r,\l}(q)$ is a divisor of $2e/r$ and is equal to the least integer $j \ge 1$ such that $\l^{p^{jr}} = \l$. 

\begin{theorem}
\label{thm:char}
Let $q=p^e$ be a prime power and $r \ge 1$ a divisor of $2e$. Let $G = \PGU(3,q) \rtimes \la \psi^r \ra$. 
\begin{itemize}
\item[\rm (a)] Suppose $\l \in \FFF_{q^2}^{*}$ such that $\l^q$ belongs to the $\la \psi^r \ra$-orbit on $\FFF_{q^2}$ containing $\l$. Then $\Ga_{r,\l}(q)$ is a $G$-symmetric graph of order $q^2(q^3+1)$ and valency $k_{r,\l}(q)q(q^2-1)$ that admits $\BB$ above as a nontrivial $G$-invariant partition such that the quotient $\Ga_{r,\l}(q)_{\BB}$ is a complete graph and $\Ga_{r,\l}(q)$ is an almost multicover of $\Ga_{r,\l}(q)_{\BB}$. Moreover, for distinct points $\s, \t$ of $U_{H}(q)$, $(\s, L(\s \t))$ is the only vertex in $B(\s)$ which has no neighbour in $B(\t)$. Furthermore, the bipartite subgraph of $\Ga_{r,\l}(q)$ induced on $B(\s) \cup B(\t)$ (excluding the two isolates) has valency $k_{r,\l}(q)$, and each vertex of $\Ga_{r,\l}(q)$ has neighbours in exactly $q(q^2-1)$ blocks of $\BB$.
\item[\rm (b)] Conversely, if $\Ga$ is a $G$-symmetric graph that admits a nontrivial $G$-invariant partition $\BB$ of block size at least 3 such that $\Ga_{\BB}$ is a complete graph, $\Ga$ is an almost multicover of $\Ga_{\BB}$, and $\DD(\Ga, \BB)$ is a linear space, then $\Ga$ is isomorphic to a unitary graph $\Ga_{r,\l}(q)$ for some $q, r, \l$ as above.  
\end{itemize}
\end{theorem}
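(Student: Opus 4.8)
The plan is to prove the two implications separately. For part~(a) I would analyse the action of $G=\PGU(3,q)\rtimes\la\psi^r\ra$ on the flag set $V(q)$ directly, and for part~(b) I would combine part~(a) with the flag-graph machinery of Lemma~\ref{lem:flag graph}. Begin with the structural facts for~(a). The group $G$ acts on $V(q)$ through its action on the points and lines of $U_H(q)$, and it preserves the adjacency of $\Ga_{r,\l}(q)$: an element of $\PGU(3,q)$ is linear and respects isotropy, orthogonality and the scalars $\l^{qp^{ir}}$ occurring in (\ref{eq:l1})--(\ref{eq:l2}), so it sends an adjacent pair realised by a datum $(\bu_0,i)$ to the adjacent pair realised by the image datum with the same $i$, while $\psi^r$ merely permutes the families of edges indexed by $i$, the hypothesis on $\l$ ensuring that their union is $\psi^r$-stable (cf. the Remark after Definition~\ref{def:ugraph}). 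Since $G$ is flag-transitive on $U_H(q)$ it is transitive on $V(q)$; since it permutes the points $X$, the partition $\BB$ of~(\ref{eq:B}) is $G$-invariant of block size $q^2$; and a direct substitution shows that if two flags share their point-entry then the lines in (\ref{eq:l1}) and (\ref{eq:l2}) are equal, so no edge lies inside a block. For distinct points $\s,\t$, Lemma~\ref{lem:soln} supplies a nonisotropic $\bu_0$ orthogonal to chosen representatives $\bu_1\in\s$, $\bu_2\in\t$; feeding it into (\ref{eq:l1})--(\ref{eq:l2}) with $i=0$ gives an edge between $B(\s)$ and $B(\t)$, so $\Ga_{r,\l}(q)_{\BB}$ is complete.

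The core of part~(a) is the picture at a pair of blocks, and the pivotal point, again from Lemma~\ref{lem:soln}, is that for a fixed pair $\{\s,\t\}$ the admissible vectors $\bu_0$ in (\ref{eq:l1})--(\ref{eq:l2}) are exactly the nonzero vectors of the $1$-space $\la\bu_1,\bu_2\ra^\perp$, which is a nondegenerate line of $\PG(2,q^2)$. Letting $\bu_0$ run through it, the line $L_1=\la\bu_1,\bu_0+\bu_2\ra$ runs over all secant lines through $\s$ except $L(\s\t)=\la\bu_1,\bu_2\ra$ itself, the latter being unattainable since it would force the nonzero vector $\bu_0$ into $\la\bu_1,\bu_2\ra$, contradicting nondegeneracy of that $2$-space. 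Hence $(\s,L(\s\t))$ is the unique vertex of $B(\s)$ with no neighbour in $B(\t)$, so $\Ga_{r,\l}(q)$ is an almost multicover of its complete quotient. For a fixed admissible $L_1$ the vector $\bu_0$ is then pinned down up to scalar, the only remaining freedom is the index $i$, and distinct values of $\l^{qp^{ir}}$ give distinct lines $L_2$ through $\t$; since the $\la\psi^r\ra$-orbit of $\l^q$ has size $k_{r,\l}(q)$, the vertex $(\s,L_1)$ has exactly $k_{r,\l}(q)$ neighbours in $B(\t)$, the bipartite subgraph on $B(\s)\cup B(\t)$ minus the isolated vertices has valency $k_{r,\l}(q)$, and as $(\s,L)$ has neighbours precisely in the $q(q^2-1)$ blocks $B(\t)$ with $\t\neq\s$ and $\t\notin L$, the valency of $\Ga_{r,\l}(q)$ is $k_{r,\l}(q)\,q(q^2-1)$. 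Finally, for $G$-symmetry one uses that $\PSU(3,q)$ is $2$-transitive on $X$, that $\PGU(3,q)_{\s\t}$ is cyclic of order $q^2-1$ and acts regularly on the $q^2-1$ secant lines through $\s$ other than $L(\s\t)$ (and likewise through $\t$), and that the stabiliser $(G_{\s\t})_{(\s,L)}\cong\la\psi^r\ra$ permutes the $k_{r,\l}(q)$ neighbours of $(\s,L)$ in an adjacent block $B(\t)$ in the same way $\la\psi^r\ra$ permutes the $\la\psi^r\ra$-orbit of $\l^q$, hence transitively; combining these, $G$ is transitive on the arcs of $\Ga_{r,\l}(q)$.

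For part~(b), let $(\Ga,G,\BB)$ be as in the statement. By Lemma~\ref{lem:flag graph} there are a $G$-doubly-point-transitive and $G$-block-transitive $2$-design $\DD\cong\DD(\Ga,\BB)$ which is a linear space, a feasible $G$-orbit $\Om$ on the flags of $\DD$, and a self-paired $G$-orbital $\Psi$ of $\Om$ compatible with $\Om$, such that $\Ga\cong\Ga(\DD,\Om,\Psi)$. Since $G$ acts $2$-transitively on the $v$ points of $\DD$ and its only faithful $2$-transitive action is the natural one on the $q^3+1$ absolute points, $v=q^3+1$. Feasibility~(A4) then excludes the trivial linear space, because there $|\Om(\s)|=v-1=q^3$ and (A4) would require $|G_{\s\t}|=(q^2-1)(2e/r)\ge q^3-1$, which fails for every $q>2$; so $\DD$ is a nontrivial $2$-transitive linear space, and by the classification \cite{Kantor85} it is the Hermitian unital $U_H(q)$, whence $k=q+1$ and $\BB$ has block size $(v-1)/(k-1)=q^2$. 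As $G$ is flag-transitive on $U_H(q)$, the only $G$-orbit on flags is $\Om=V(q)$, and (A1)--(A4) are readily verified. It then remains to enumerate the self-paired compatible $G$-orbitals $\Psi\subseteq V(q)\times V(q)$: a compatible pair has the form $((\s,L),(\t,M))$ with $\s\neq\t$, $\t\notin L$, $\s\notin M$, and using the $2$-transitivity of $\PSU(3,q)$ together with the regular action of $\PGU(3,q)_{\s\t}$ on the pencils of secant lines through $\s$ and through $\t$ (as in part~(a)), each such orbital is coded by a single scalar $\l\in\FFF_{q^2}^*$ modulo the $\la\psi^r\ra$-orbit, the self-pairing condition amounting exactly to the requirement that $\l^q$ lie in that orbit. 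One then checks that the adjacency of $\Ga(U_H(q),V(q),\Psi)$ is precisely the one given by (\ref{eq:l1})--(\ref{eq:l2}), so $\Ga\cong\Ga_{r,\l}(q)$ for suitable $r,\l$.

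The hard part is this last step of~(b), equivalently the representative bookkeeping underlying the valency count in~(a): one must control the $\PGU(3,q)_{\s\t}$-action on the two pencils of secant lines (through $\s$ and through $\t$) finely enough to see that the compatible self-paired orbitals are parametrised exactly by the admissible scalars $\l$ of Definition~\ref{def:ugraph}, neither more nor fewer, and that precisely the twist $\l^{qp^{ir}}$ and precisely the hypothesis that $\l^q$ lies in the $\la\psi^r\ra$-orbit of $\l$ drop out of the self-pairing requirement. Everything else -- the structural facts, the appeal to \cite{Kantor85}, and the verification of feasibility -- is routine.
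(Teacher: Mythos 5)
Your strategy is essentially the paper's: part (a) by direct analysis of the $G$-action around a base flag, part (b) via Lemma \ref{lem:flag graph}, the classification of doubly point-transitive linear spaces, and an enumeration of the self-paired compatible $G$-orbitals of $V(q)$ (your exclusion of the trivial linear space by comparing $|G_{\s\t}|$ with $q^3-1$ is a correct small addition). But the decisive computations are missing, and you say so yourself (``the hard part is this last step''). In (a), the bipartite valency $k_{r,\l}(q)$ does not follow from ``$\bu_0$ is pinned down up to scalar, the only remaining freedom is the index $i$'': for fixed representatives and fixed $L_1=\la\bu_1,\bu_0+\bu_2\ra$ the vector $\bu_0$ is determined exactly, whereas if you do allow the scalar --- replace $(\bu_0,\bu_2)$ by $(c\bu_0,c\bu_2)$ --- then $L_1$ is unchanged but $L_2$ becomes $\la\bu_2,\bu_0+c^{-1}\l^{qp^{ir}}\bu_1\ra$, and letting $c$ run over $\FFF_{q^2}^*$ would produce all $q^2-1$ secants through $\t$ other than $L(\s\t)$, not $k_{r,\l}(q)$ of them. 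So the count requires fixing and tracking a normalisation of $(\bu_0,\bu_1,\bu_2)$; this is exactly the ``representative bookkeeping'' you defer, and it is what the paper supplies group-theoretically by computing $H=G_{\infty,L}$, the stabiliser $H_{0,N}$ of the neighbour $(0,N)$ with $N:y=\l^q x$, and applying the orbit-stabiliser lemma, which simultaneously yields $G$-symmetry. Your arc-transitivity argument likewise presupposes the unproved identification of the $k^*$ neighbours of $(\infty,L)$ in $B(0)$ with the flags $(0,N_i)$, $N_i: y=\l^{qp^{ir}}x$.

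In (b) the same omission is the heart of the matter: you assert that the self-paired compatible orbitals are coded by scalars $\l$ with $\l^q$ in the $\la\psi^r\ra$-orbit of $\l$, and that the resulting flag graph has adjacency (\ref{eq:l1})--(\ref{eq:l2}), but these are precisely what has to be proved. The paper does it concretely: by Lemma \ref{lem:3} every orbital contains $((\infty,L),(0,N))$ for some $N$ through $0$; composing an interchanging element $\phi$ with $\phi_0:x'=x,\,y'=z,\,z'=y$ puts $\phi$ in an explicit form, and the requirement that $\phi^2$ fix $L$ forces $\l^{p^{kr}}=\l^q$ (with the converse also checked); finally, an arbitrary element of $G$ sending $\infty,0$ to $\la\bu_1\ra,\la\bu_2\ra$ is written as $\phi_{\bu_0,i}$ with columns $\bu_0,\bu_1,\bu_2$, Lemma \ref{lem:soln} controls $\bu_0$, and computing the images of $L$ and $N$ recovers (\ref{eq:l1})--(\ref{eq:l2}). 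None of this analysis appears in your proposal, so the quantitative claims of (a) and the converse identification in (b) remain unestablished; what you have is an accurate outline of the paper's route with its key steps left open.
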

  
The rest of this section is devoted to the proof of Theorem \ref{thm:char}. We need the following results which can be easily verified. (We write elements of $\PGU(3,q)$ as linear transformations.)

\begin{lemma}
\label{lem:stab}
\begin{itemize}
\item[{\rm (a)}] $\PGU(3, q)_{\infty} = \{x' = (ax+bz)/a, y'=(ab^q x + a^{q+1}y + cz)/a, z' = z/a: a \in \FFF^*_{q^2}, b, c \in \FFF_{q^2}, c^q + c = b^{q+1}\}$.
\item[\rm (b)] $\PGU(3, q)_{\infty, 0} = \{x' = x, y'=a^{q}y, z' = z/a: a \in \FFF^*_{q^2}\}.$
\item[\rm (c)] For any divisor $r \ge 1$ of $2e$, $(\PGU(3, q) \rtimes \la \psi^{r} \ra)_{\infty} = \PGU(3, q)_{\infty} \rtimes \la \psi^{r} \ra$. 
\end{itemize}
\end{lemma}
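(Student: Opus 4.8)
The plan is to realize each stabilizer by an explicit family of matrices and then pin it down by an order count coming from the (double) transitivity of $\PGU(3,q)$ on the set $X$ of absolute points. Throughout I write an element of $\GU(3,q)$ as a matrix $M$ acting on column vectors, so that invariance of the form, $\b(M\bu_1,M\bu_2)=\b(\bu_1,\bu_2)$ for all $\bu_1,\bu_2$, is equivalent to $M^{T}DM^{(q)}=D$, where $M^{(q)}$ is the entrywise $q$-th power of $M$ and $D$ is the Hermitian matrix fixed above; two matrices represent the same element of $\PGU(3,q)$ exactly when they differ by a central scalar. I will use $|\PGU(3,q)|=q^{3}(q^{3}+1)(q^{2}-1)$, $|X|=q^{3}+1$, and the double transitivity of $\PGU(3,q)$ on $X$ (inherited from $\PSU(3,q)$).

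For part (b) I would exhibit the matrices $M_{a}=\diag(1,a^{q},a^{-1})$ for $a\in\FFF_{q^2}^{*}$, i.e. the maps $x'=x,\ y'=a^{q}y,\ z'=z/a$. A direct multiplication (using $a^{q^{2}}=a$) gives $M_{a}^{T}DM_{a}^{(q)}=D$, so $M_{a}\in\GU(3,q)$, and $M_{a}$ fixes both $\langle 0,1,0\rangle$ and $\langle 0,0,1\rangle$, hence lies in $\PGU(3,q)_{\infty,0}$. Comparing $(1,1)$-entries shows $M_{a}$ and $M_{a'}$ are scalar multiples only when $a=a'$, so these are $q^{2}-1$ distinct elements of $\PGU(3,q)$. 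Double transitivity gives $|\PGU(3,q)_{\infty,0}|=|\PGU(3,q)|/(|X|(|X|-1))=q^{2}-1$; since the explicit set sits inside the stabilizer and has the same size, it is the whole stabilizer, proving (b).

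For part (a) the candidate matrices are
\[
M(a,b,c)=\begin{pmatrix} 1 & 0 & b/a \\ b^{q} & a^{q} & c/a \\ 0 & 0 & 1/a \end{pmatrix},\qquad a\in\FFF_{q^2}^{*},\ b,c\in\FFF_{q^2},
\]
which are exactly the maps $x'=(ax+bz)/a$, $y'=(ab^{q}x+a^{q+1}y+cz)/a$, $z'=z/a$. A single matrix multiplication shows that every entry of $M(a,b,c)^{T}DM(a,b,c)^{(q)}$ already agrees with $D$ except the $(3,3)$-entry, which equals $(c+c^{q}-b^{q+1})/a^{q+1}$; hence $M(a,b,c)\in\GU(3,q)$ if and only if $c^{q}+c=b^{q+1}$, precisely the stated constraint, and each such map fixes $\langle 0,1,0\rangle$. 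To count them, note the additive map $c\mapsto c^{q}+c$ is an $\FFF_{q}$-linear surjection $\FFF_{q^2}\to\FFF_{q}$ with kernel of size $q$, so for each of the $q^{2}$ values of $b$ (and $b^{q+1}\in\FFF_{q}$) there are exactly $q$ admissible $c$; with $q^{2}-1$ choices of $a$ this gives $q^{3}(q^{2}-1)$ triples, and comparing $(1,1)$-entries again shows distinct triples yield distinct elements of $\PGU(3,q)$. Transitivity of $\PGU(3,q)$ on $X$ gives $|\PGU(3,q)_{\infty}|=|\PGU(3,q)|/|X|=q^{3}(q^{2}-1)$, so containment together with equal cardinality yields (a).

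For part (c) I observe that $\psi$ acts on absolute points by $\langle x,y,z\rangle\mapsto\langle x^{p},y^{p},z^{p}\rangle$ and so fixes $\infty=\langle 0,1,0\rangle$, whence $\psi^{r}$ fixes $\infty$ too. Writing a general element of $\PGU(3,q)\rtimes\langle\psi^{r}\rangle$ as $g\psi^{rj}$ with $g\in\PGU(3,q)$ and using that $\psi^{rj}$ fixes $\infty$, one has $g\psi^{rj}(\infty)=\infty$ if and only if $g(\infty)=\infty$, i.e. $g\in\PGU(3,q)_{\infty}$; moreover conjugation by $\psi^{r}$ sends $\PGU(3,q)_{\infty}$ to $\PGU(3,q)_{\psi^{r}(\infty)}=\PGU(3,q)_{\infty}$, so $\langle\psi^{r}\rangle$ normalizes $\PGU(3,q)_{\infty}$, and with $\PGU(3,q)\cap\langle\psi\rangle=1$ the stabilizer is exactly $\PGU(3,q)_{\infty}\rtimes\langle\psi^{r}\rangle$. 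The only real computation, and hence the main obstacle, is the $(3,3)$-entry check in (a) that isolates $c^{q}+c=b^{q+1}$ as the sole nontrivial unitarity relation; the feature that makes the whole lemma ``easily verified'' is the order count, which lets me avoid solving the full unitarity system by instead verifying that the explicit matrices lie in the stabilizer, are mutually distinct, and are correct in number by (double) transitivity on $X$.
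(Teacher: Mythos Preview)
Your argument is correct. The paper itself gives no proof of this lemma, stating only that the results ``can be easily verified''; your approach of exhibiting the explicit matrices, checking the single nontrivial unitarity relation $c^{q}+c=b^{q+1}$, and then matching cardinalities via the (double) transitivity of $\PGU(3,q)$ on $X$ is precisely such a verification, and your treatment of part~(c) via the fact that $\psi^{r}$ fixes $\infty$ is the intended one.
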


In the following we will use the following well known facts: for any $\eta \in \FFF^*_{q}$, the equation $x^{q+1} = \eta$ has exactly $q+1$ solutions in $\FFF^*_{q^2}$; for any $\eta \in \FFF_{q}$, the equation $x+x^{q} = \eta$ has exactly $q$ solutions in $\FFF_{q^2}$. 

Now we prove that $V(q)$ is feasible with respect to $\PGU(3, q)$. 

\begin{lemma}
\label{lem:fbty}
\begin{itemize}
\item[{\rm (a)}] For any divisor $r \ge 1$ of $2e$, $V(q)$ is feasible with respect to $\PGU(3, q) \rtimes \la \psi^{r} \ra$.
\item[\rm (b)] If $3$ divides $q+1$, then $V(q)$ is not feasible with respect to $\PSU(3,q)$.
\end{itemize}
\end{lemma}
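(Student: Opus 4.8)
The plan is to verify conditions (A1)--(A4) directly for the flag set $V(q)$ of the Hermitian unital $U_H(q)$, using the explicit descriptions of stabilizers in Lemma~\ref{lem:stab} and the parameters of $U_H(q)$ recalled in Section~\ref{sec:ugraph}. For part (a), fix a point $\s$ of $U_H(q)$; since $\PGU(3,q)\rtimes\la\psi^r\ra$ is point-transitive, it suffices to treat $\s=\infty=\la 0,1,0\ra$. Condition (A1) asks $|V(q)(\infty)|\ge 3$: this is immediate because the number of flags with point-entry $\infty$ equals the number of lines of $U_H(q)$ through $\infty$, which is $q^2\ge 9>3$ since $q>2$. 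Condition (A2) asks that two distinct lines of $U_H(q)$ through $\infty$ meet only in $\infty$; this holds because $U_H(q)$ is a linear space, so two points lie on a unique common line, hence two distinct lines share at most one point.

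For (A3) I would show that the flag stabilizer $(\PGU(3,q)\rtimes\la\psi^r\ra)_{\infty,L}$ is transitive on $L\setminus\{\infty\}$, a set of size $q$. By Lemma~\ref{lem:stab}(c) the stabilizer of $\infty$ in $\PGU(3,q)\rtimes\la\psi^r\ra$ is $\PGU(3,q)_\infty\rtimes\la\psi^r\ra$, and Lemma~\ref{lem:stab}(a) gives the explicit Borel-type subgroup $\PGU(3,q)_\infty$. The cleanest route is to observe that $\PGU(3,q)_\infty$ already acts $2$-transitively (indeed sharply doubly transitively, up to the obvious structure) on the $q^2$ lines through $\infty$ minus... no --- more carefully: $\PGU(3,q)_\infty$ is transitive on the $q^2$ lines through $\infty$ (flag-transitivity of $G$ on $U_H(q)$ forces the point stabilizer to be transitive on lines through that point), and one checks from the matrix form in Lemma~\ref{lem:stab}(a) that the subgroup fixing one further line $L$ through $\infty$ still acts transitively on the $q$ remaining points of $L$; here the ``translation'' part parametrized by $b,c$ (with $c^q+c=b^{q+1}$, a group of order $q^3$) moves the points of a fixed line transitively, or failing that the diagonal part of order $q^2-1$ combined with it does. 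For (A4) I need $G_{\infty,\t}$ transitive on $V(q)(\infty)\setminus\{(\infty,L(\infty\t))\}$ for $\t\in L(\infty\t)\setminus\{\infty\}$; by further point-transitivity reduce to $\t=0=\la 0,0,1\ra$, invoke Lemma~\ref{lem:stab}(b) for $\PGU(3,q)_{\infty,0}=\{x'=x,\ y'=a^qy,\ z'=z/a\}$ of order $q^2-1$, and show this (extended by $\la\psi^r\ra$ if needed, but $\PGU$ alone should suffice) permutes transitively the $q^2-1$ lines through $\infty$ other than $L(\infty\,0)$.

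For part (b): when $3\mid q+1$, $\PSU(3,q)$ is an index-$3$ subgroup of $\PGU(3,q)$, and the claim is that $V(q)$ fails to be feasible with respect to $\PSU(3,q)$. Since $U_H(q)$ is still $\PSU(3,q)$-doubly-transitive and flag-transitive, (A1) and (A2) persist, so the failure must be in (A3) or (A4). I expect it is (A3): the stabilizer in $\PSU(3,q)$ of a flag $(\infty,L)$ has index $3$ inside $\PGU(3,q)_{\infty,L}$ (one must check the full flag stabilizer is not contained in $\PSU$, using that the diagonal element with $a$ a non-cube scales the determinant by a non-cube), and $q$ is a power of $p$ while $3\mid q+1$ forces $p\ne 3$, so $3\nmid q$; hence an orbit of the $\PSU$-flag-stabilizer on $L\setminus\{\infty\}$ has size $q$ or $q/3$ --- and if the $\PGU$-stabilizer acts on the $q$ points with the extra $\ZZZ_3$ acting nontrivially (e.g.\ by a scalar-induced permutation), the $\PSU$-part is transitive only if that $\ZZZ_3$ quotient was redundant. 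The honest approach is to exhibit the element of $\PGU(3,q)_{\infty,L}\setminus\PSU(3,q)$ explicitly, compute its action on $L\setminus\{\infty\}$, and show the $\PSU$-subgroup has orbits of size $q/3$ there; equivalently, compare $|\PGU(3,q)_{\infty,L}|$ with $q\cdot|\PGU(3,q)_{\infty,L,\t}|$ and note the factor of $3$ lands on the wrong side. The main obstacle is pinning down exactly which of (A3), (A4) breaks and producing the clean orbit-size count --- the rest is routine verification from Lemma~\ref{lem:stab}.
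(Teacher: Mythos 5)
Your part (a) is fine and is essentially the paper's own argument: reduce to $\PGU(3,q)$ (feasibility passes to overgroups, so the $\la\psi^r\ra$-extension is automatic), note that (A1) and (A2) are immediate, verify (A3) from the explicit flag stabilizer coming from Lemma \ref{lem:stab}(a) (indeed the elements $x'=x$, $y'=y+cz$, $z'=z$ with $c^q+c=0$ already translate the $q$ points $\la 0,y,1\ra$ of $L\setminus\{\infty\}$ regularly), and verify (A4) using $\PGU(3,q)_{\infty,0}$ from Lemma \ref{lem:stab}(b); that you check (A4) on the lines through $\infty$ while the paper checks it on the lines through $0$ is immaterial.

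Part (b), however, contains a genuine gap: you bet on (A3) failing, and the computation you propose cannot succeed. The set $L\setminus\{\infty\}$ has exactly $q$ points, and $\PSU(3,q)_{\infty,L}$ is normal of index dividing $3$ in $\PGU(3,q)_{\infty,L}$, so its orbits on $L\setminus\{\infty\}$ all have the same length, namely $q$ or $q/3$; since $3\mid q+1$ forces $3\nmid q$ (as you yourself observe), the only possibility is $q$, so (A3) in fact \emph{holds} for $\PSU(3,q)$. Concretely, the determinant-one elements $x'=x$, $y'=y+cz$, $z'=z$ with $c^q+c=0$ lie in $\PSU(3,q)_{\infty,L}$ and are already transitive on $L\setminus\{\infty\}$; so exhibiting orbits of size $q/3$ there, as your ``honest approach'' proposes, is impossible. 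The divisibility obstruction lives in (A4), where the relevant set has size $q^2-1$, which \emph{is} divisible by $3$ when $3\mid q+1$: the group $\PGU(3,q)_{\infty,0}=\{x'=x,\ y'=a^qy,\ z'=z/a\}$ acts regularly on the $q^2-1$ lines $y=mx$ ($m\ne 0$) through $0$ via $m\mapsto a^qm$, and $\PSU(3,q)_{\infty,0}$ has index $3$ in it (by double point-transitivity of $\PSU$), hence has orbits of length $(q^2-1)/3$ and is not transitive on $\Om(0)\setminus\{(0,L)\}$. This is exactly what the paper does, phrased there as: the images of $N:y=x$ under $\PSU(3,q)_{\infty,0}$ are only those lines $y=a^qx$ with $a^{q-1}=d^3$ for some $d$ with $d^{q+1}=1$, which is not all of them when $3\mid q+1$. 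So to repair your argument you must redirect the index-$3$ count from the flag stabilizer acting on the $q$ points of $L\setminus\{\s\}$ to the two-point stabilizer acting on the $q^2-1$ remaining lines through one of the two points.
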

\begin{proof}
(a) It suffices to prove that $V(q)$ is feasible with respect to $\PGU(3, q)$. Obviously $V(q)$ satisfies (A1) and (A2). Let $L: x= 0$ be the line through $\infty$ and $0$. Since $\PGU(3, q)$ is transitive on $V(q)$, to prove (A3) it suffices to prove that $\PGU(3,q)_{\infty, L}$ is transitive on $L \setminus \{\infty\}$. In fact, by Lemma \ref{lem:stab} we have 
\begin{equation}
\label{eq:infL}
\PGU(3, q)_{\infty, L} = \{x' = x, y'=(a^{q+1}y + cz)/a, z' = z/a: a \in \FFF^*_{q^2}, c^q + c = 0\}.
\end{equation} 
By the orbit-stabilizer lemma, the $\PGU(3, q)_{\infty, L}$-orbit on $L \setminus \{\infty\}$ containing $0$ has length $|\PGU(3, q)_{\infty, L}|/|\PGU(3, q)_{\infty, L, 0}| = q = |L \setminus \{\infty\}|$ (note that $\PGU(3, q)_{\infty, L, 0}$ $= \PGU(3, q)_{\infty, 0}$). It follows that $\PGU(3,q)_{\infty, L}$ is transitive on $L \setminus \{\infty\}$.

Since $\PGU(3, q)$ is doubly transitive on the set of points of $U_{H}(q)$, $V(q)$ satisfies (A4) with respect to $\PGU(3, q)$ if and only if $\PGU(3,q)_{\infty, 0}$ is transitive on the set of lines through $0$ other than $L$. Fix such a line, say, $N: y = x$. The image of $N$ under a typical element $x' = x, y'=a^{q}y, z' = z/a$ of $\PGU(3,q)_{\infty, 0}$ has equation $y = a^q x$, where $a \in \FFF^*_{q^2}$. Since all lines of $U_{H}(q)$ through $0$ other than $L$ are of this form, it follows that $V(q)$ satisfies (A4) and so is feasible with respect to $\PGU(3, q)$. 

(b) By Lemma \ref{lem:stab}, the image of $N: y = x$ under a typical element of $\PSU(3,q)_{\infty, 0}$ has equation $y = a^q x$, where $a \in \FFF^*_{q^2}$ satisfies $a^{q-1} = d^3$ for some $d \in \FFF^*_{q^2}$ such that $d^{q+1} = 1$. Since not every line of $U_{H}(q)$ through $0$ but not $\infty$ is of this form when $3$ divides $q+1$, $V(q)$ is not feasible with respect to $\PSU(3, q)$ in this case. 
\qed
\end{proof}

\begin{lemma}
\label{lem:3}
For $(\s, L) \in V(q)$, $\PGU(3,q)_{\s, L}$ is regular on the set of points of $U_{H}(q)$ not in $L$. 
\end{lemma}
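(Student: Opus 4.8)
The plan is to exploit flag-transitivity to reduce to one convenient flag, then carry out an explicit coordinate computation using the description (\ref{eq:infL}) obtained in the proof of Lemma~\ref{lem:fbty}. Since $\PGU(3,q)$ is flag-transitive on $U_{H}(q)$, it suffices to prove the statement for a single flag, say $(\s, L) = (\infty, L_0)$ with $L_0 \colon x = 0$ the line through $\infty$ and $0$; for an arbitrary flag the assertion then follows by conjugation, as the property of acting regularly on the points off the line is preserved under the $\PGU(3,q)$-action.

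First I would record the relevant orders. From $|\PGU(3,q)| = q^3(q^2-1)(q^3+1)$ together with flag-transitivity on the $q^2(q^3+1)$ flags of $U_{H}(q)$ (or, equivalently, directly by counting the pairs $(a,c)$ in (\ref{eq:infL}), of which there are $(q^2-1)\cdot q$), one gets $|\PGU(3,q)_{\infty, L_0}| = q(q^2-1)$. On the other hand $U_{H}(q)$ has $q^3+1$ points and $L_0$ contains $q+1$ of them, so there are exactly $q^3 - q = q(q^2-1)$ points off $L_0$. Hence "regular" will follow once "transitive" is established, and it is enough to show that $\PGU(3,q)_{\infty, L_0}$ is transitive on the set of points of $U_{H}(q)$ not in $L_0$.

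Next I would set up an affine chart. Any absolute point off $L_0$ has nonzero $x$-coordinate, hence (since an absolute point with $z=0$ must also have $x=0$) nonzero $z$-coordinate, so it lies in the chart $z = 1$ and has the form $\la x, y, 1 \ra$ with $x \ne 0$ and $x^{q+1} = y + y^q$. Projectivizing the description (\ref{eq:infL}), the element $x' = x,\ y' = (a^{q+1}y + cz)/a,\ z' = z/a$ (with $a \in \FFF_{q^2}^*$, $c^q + c = 0$) acts on this chart by $(x, y) \mapsto (ax,\ a^{q+1}y + c)$. Fix the base point $(1, y_0)$, where $y_0 \in \FFF_{q^2}$ is chosen with $y_0 + y_0^q = 1$ (possible since the relative trace $\FFF_{q^2} \to \FFF_q$ is onto), which indeed lies off $L_0$. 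Given an arbitrary target $(x, y)$ with $x \ne 0$ and $x^{q+1} = y + y^q$, I would take $a = x$ and $c = y - x^{q+1} y_0$; then $c^q + c = (y + y^q) - x^{q+1}(y_0 + y_0^q) = x^{q+1} - x^{q+1} = 0$, so this is a genuine element of $\PGU(3,q)_{\infty, L_0}$, and it sends $(1, y_0)$ to $(x, y)$. This gives transitivity, hence regularity.

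There is no serious obstacle: the argument is essentially a coordinate computation. The two points that require care are (i) correctly projectivizing the action of (\ref{eq:infL}) on the chart $z = 1$ — it matters that the $x$-coordinate scales by $a$, not by $1$ — and (ii) verifying that the constraint $c^q + c = 0$ is automatically satisfied by the element exhibited, which is precisely where the absoluteness conditions $x^{q+1} = y + y^q$ and $y_0 + y_0^q = 1$ are used. If one prefers to avoid the order count altogether, regularity can be obtained directly instead: the transitivity above plus the observation that any element fixing $(1, y_0)$ forces $a = 1$ and then $c = 0$, i.e.\ the identity.
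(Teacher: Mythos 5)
Your proof is correct and follows essentially the same route as the paper: reduce to the flag $(\infty, L_0)$ with $L_0\colon x=0$, use the explicit description (\ref{eq:infL}) of $\PGU(3,q)_{\infty,L_0}$, and compare $|\PGU(3,q)_{\infty,L_0}|=q(q^2-1)$ with the number of points off $L_0$. The only (immaterial) difference is the direction of the counting step: the paper shows the stabilizer of a point off $L_0$ is trivial and deduces transitivity from the orbit--stabilizer count, whereas you exhibit an explicit transporting element $(a,c)=(x,\,y-x^{q+1}y_0)$ to get transitivity and deduce regularity from the count (or from your remark that the point stabilizer is trivial).
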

\begin{proof}
Since $\PGU(3,q)$ is transitive on $V(q)$, without loss of generality we may assume $\s = \infty$ and $L: x= 0$, so that $\PGU(3, q)_{\infty, L}$ is as given in (\ref{eq:infL}). Fix a point $\t = \la l, m, 1 \ra$ of $U_{H}(q)$ not in $L$, where $l \in \FFF^*_{q^2}$ and $m \in \FFF_{q^2}$ such that $l^{q+1} = m + m^q$. An element $x' = x, y'=(a^{q+1}y + cz)/a, z' = z/a$ of $\PGU(3, q)_{\infty, L}$ fixes $\t$ if and only if $a = 1$ and $c = 0$; that is, $\PGU(3, q)_{\infty, L, \t}$ is the identity subgroup of $\PGU(3, q)$. Thus the $\PGU(3, q)_{\infty, L}$-orbit on $V(q)$ containing $\t$ has length $|\PGU(3, q)_{\infty, L}| = (q^2-1)q$. Since there are exactly $(q^2-1)q$ points of $U_{H}(q)$ not in $L$, it follows that $\PGU(3,q)_{\infty, L}$ is regular on such points. 
\qed 
\end{proof}

\medskip
\begin{proof} \textbf{of Theorem \ref{thm:char}}~
(a) Denote $\Ga = \Ga_{r,\l}(q)$ and $k^* = k_{r,\l}(q)$. Since $\PGU(3, q)$ preserves the Hermitian form $\b$, one can verify that $G$ preserves the adjacency relation of $\Ga$ (under the induced action of $G$ on $V(q)$) and hence can be viewed as a subgroup of $\Aut(\Ga)$. $\Ga$ is clearly $G$-vertex transitive. We now prove that $\Ga$ is $G$-symmetric.  
 
Choose $d \in \FFF^*_{q^2}$ such that $d + d^q = 1$. Then $\la 1, d, 1 \ra \in X$ and $L: x = z$ is the unique line through $\infty$ and $\la 1, d, 1 \ra$. Since an element of $G_{\infty}$ fixes $L$ if and only if it maps $\la 1, d, 1 \ra$ to a point in $L$, we have
\begin{equation}
\label{eq:H}
H := G_{\infty, L} = \PGU(3, q)_{\infty, L} \rtimes \la \psi^r \ra,
\end{equation}
where by Lemma \ref{lem:stab} one can show that $\PGU(3, q)_{\infty, L} = \{x' = (ax+(1-a)z)/a, y'=(a(1-a)^q x + a^{q+1}y + cz)/a, z' = z/a: a \in \FFF^*_{q^2}, c^q + c = (1-a)^{q+1}\}$.

A flag $(\la \bu_2 \ra, L_2) \in V(q)$ is adjacent to $(\infty, L)$ in $\Ga$ if and only if there exists $\bu_0 = (a_0, b_0, c_0) \in V(3, q^2)$ satisfying (\ref{eq:rho}) for $\bu_1 =  (0, 1, 0)$, $\bu_2 = (a_2, b_2, c_2)$ and some $\mu \in \FFF^*_{q^2}$ such that $L$ and $L_2$ are given by (\ref{eq:l1}) and (\ref{eq:l2}) respectively. One can see that (\ref{eq:l1}) gives $L: x=z$ if and only if $c_0 + c_2 = a_0 + a_2 \ne 0$. From the second equation in (\ref{eq:rho}) we have $c_0 = 0$, and using this the other two equations in (\ref{eq:rho}) amount to $a_0^{q+1} = \mu^{q+1}$ and $a_0 a_2^q = b_0 c_2^q$ respectively. Thus $\bu_0 = (a_0, a_0 a_2^q/(a_0 + a_2)^q, 0)$. Since $\bu_2$ is assumed to be isotropic, we have $(a_0 + a_2)^q b_2 + (a_0 + a_2) b_2^q = a_2^{q+1}$, which gives exactly $q$ possible values of $b_2$ for fixed $a_0$ and $a_2$. Note that for a fixed $a_0$, $a_2$ can be any element of $\FFF_{q^2}$ other than $-a_0$. Thus, for a fixed $a_0$, there are exactly $q(q^2-1)$ 
isotropic vectors $\bu_2 = (a_2, b_2, a_0 + a_2)$, of which no two are multiples of each other, such that $(\la \bu_2 \ra, L_2)$ is adjacent to $(\infty, L)$ for some $L_2$ through $\bu_2$. Since for a fixed $\mu$ any two solutions to $x^{q+1} = \mu^{q+1}$ differ by a scalar multiple which is a $(q+1)$st root of unity, one can see that different pairs $(a_0, \mu)$ satisfying $a_0^{q+1} = \mu^{q+1}$ give rise to the same set of $\bu_2$. Thus it suffices to consider one fixed pair $(a_0, \mu)$ only. Since $\l^{qp^{ir}} = \l^{qp^{i' r}}$ if and only if $\l^{p^{ir}} = \l^{p^{i'r}}$, each $\bu_2$ corresponds to precisely $k^*$ different lines $L_2$ such that $(\la \bu_2 \ra, L_2)$ is adjacent to $(\infty, L)$.
Therefore, the valency of $(\infty, L)$ (and hence all other vertices) in $\Ga$ is equal to $k^* q(q^2-1)$. 
 
Choosing $\mu = 1$, $\bu_0 = (1, 0, 0)$, $\bu_2 = (0, 0, 1)$ and $i = 0$,  (\ref{eq:l2}) gives rise to the line $N: y=\l^q x$ and thus $(\infty, L)$ and $(0, N)$ are adjacent in $\Ga$. We have $H_0 = \la \psi^r \ra$, where $H$ is as defined in (\ref{eq:H}). An element $\psi^{jr}$ of $H_0$ maps $N$ to the line $y = \l^{qp^{jr}}x$ and hence it fixes $N$ if and only if $\l^{p^{jr}} = \l$. In other words, $H_{0, N} = \{\psi^{jr}: 0 \le j < 2e/r, \l^{p^{jr}} = \l\}$, which is the stabilizer of $\l$ in $\la \psi^r \ra \le \Aut(\FFF_{q^2})$. Thus $|H_{0, N}| = 2e/(k^*r)$. Since $|H| = 2eq(q^2 - 1)/r$, by the orbit-stabilizer lemma the $H$-orbit on $V(q)$ containing $(0, N)$ has size $k^* q(q^2 - 1)$, which is equal to the valency of $(\infty, L)$ in $\Ga$ by the previous paragraph. Since $H$ is a subgroup of $\Aut(\Ga)$ fixing $(\infty, L)$ and $(0, N)$ is adjacent to $(\infty, L)$, the $H$-orbit on $V(q)$ containing $(0, N)$ is contained in the neighbourhood of $(\infty, L)$ in $\Ga$. Since the two sets have the same size, it follows that $H$ is transitive on the neighbourhood of $(\infty, L)$ in $\Ga$. This together with the $G$-vertex transitivity of $\Ga$ implies that $\Ga$ is $G$-symmetric.  

It is clear that $\BB$ is a $G$-invariant partition of the vertex set $V(q)$ of $\Ga$. 
Since $G_{\infty, L, 0} = H_0 = \la \psi^r \ra$ and $G_{\infty, 0} = \PGU(3,q)_{\infty, 0} \rtimes \la \psi^r \ra$, by Lemma \ref{lem:stab}(b) the $G_{\infty, 0}$-orbit containing $L$ on the set of lines through $\infty$ has size $q^2 - 1$. Since $\Ga$ is $G$-symmetric and $(\infty, L)$ is adjacent to $(0, N)$, it follows that exactly $q^2 - 1$ vertices in $B(\infty)$ have neighbours in $B(0)$. Since $G$ is doubly transitive on the points of $U_H(q)$, this implies that for every pair of distinct blocks $B, C \in \BB$, exactly $q^2 - 1$ vertices of $B$ have neighbours in $C$. Let $L_0: x = 0$ be the unique line of $U_{H}(q)$ through $\infty$ and $0$. One can verify that $(\infty, L_0)$ is not adjacent to any vertex in $B(0)$ and $(0, L_0)$ is not adjacent to any vertex in $B(\infty)$. Thus $(\infty, L_0)$ is the only vertex in $B(\infty)$ without neighbour in $B(0)$, and $(0, L_0)$ is the only vertex in $B(0)$ without neighbour in $B(\infty)$. Since $G$ is doubly transitive on $X$, the last statement in (a) follows. 

Since the $H_0$-orbit containing $N$ on the lines of $U_{H}(q)$ has size $|H_0|/|H_{0, N}| = k^*$, $(\infty, L)$ has exactly $k^*$ neighbours in $B(0)$. Since $\Ga$ is $G$-symmetric and $\BB$ is $G$-invariant, it follows that any block $B \in \BB$ contains either none or exactly $k^*$ neighbours of any vertex not in $B$. Since the valency of $\Ga$ is equal to $k^* q(q^2-1)$ as shown above, each vertex of $\Ga$ has neighbours in exactly $q(q^2-1)$ blocks of $\BB$.   

\delete{
Counting the number of pairs $((\infty, L_1), B(\s))$ such that $(\infty, L_1) \in B(\infty)$ has at least one neighbour in $B(\s)$, we obtain that each vertex of $B(\infty)$ (and hence of $\Ga$) has neighbours in exactly $q(q^2-1)$ blocks of $\BB$. Since the valency of $\Ga$ is equal to $k^*q(q^2-1)$ as shown above, it follows that any block of $\BB$ containing a neighbour of a vertex must contain exactly $k^*$ neighbours of that vertex.    
}

(b) By Lemma \ref{lem:flag graph}, we have $\Ga \cong \Ga(\DD,\Om,\Psi)$ for a $G$-doubly point-transitive linear space $\DD$, a feasible $G$-orbit $\Om$ on the flags of $\DD$, and a self-paired $G$-orbital $\Psi$ of $\Om$ compatible with $\Om$. From the classification \cite{Kantor85} of such linear spaces it follows that $\DD = U_H(q)$ and so $\Om = V(q)$ by Lemma \ref{lem:fbty}.  We will determine all possible $\Psi$ and prove that they produce precisely the unitary graphs.  

Let $L: x= z$ be as above. Since $0$ is not on $L$, by Lemma \ref{lem:3} any $G$-orbital of $V(q)$ should contain $((\infty, L), (0, N))$ for some line $N$ through $0$ but not $\infty$. Suppose this $G$-orbital is self-paired. Then there exists $\phi \in G$ that interchanges $(\infty, L)$ and $(0, N)$. Since $\phi_0: x'=x, y' = z, z' = y$ is in $\PGU(3, q)$ and interchanges $\infty$ and $0$, it follows that $\phi \phi_0 \in G_{\infty, 0}$. Thus, by Lemma \ref{lem:stab}(b), $\phi \phi_0$ is of the form: $x' =  x^{p^{kr}}, y'=\l^{q}y^{p^{kr}}, z' = z^{p^{kr}}/\l$ for some $\l \in \FFF^*_{q^2}$ and $0 \le k < 2e/r$. Hence $\phi$ is of the form
$$
\phi: x' = x^{p^{kr}}, y'=\l^{q}z^{p^{kr}}, z' = y^{p^{kr}}/\l.
$$
It follows that $\phi^2: x' = x^{p^{2kr}}, y'=\l^{q-p^{kr}}y^{p^{2kr}}, z' = \l^{qp^{kr}-1}z^{p^{kr}}$. Since $\phi^2$ fixes $L$ and maps $\la 1, d, 1 \ra \in L$ to $\la 1, \l^{q-p^{kr}}d^{p^{2kr}}, \l^{qp^{kr}-1} \ra$, where $d \in \FFF_{q^2}^*$ is such that $d + d^q = 1$, we have $\l^{qp^{kr}-1} = 1$, that is, $\l^{p^{kr}} = \l^{q}$. Conversely, one can see that if $\phi$ above satisfies $\l^{p^{kr}} = \l^{q}$ then it interchanges $(\infty, L)$ and $(0, N)$ and hence the $G$-orbital of $V(q)$ containing $((\infty, L), (0, N))$ is self-paired, where $N: y = \l^q x$ is the image of $L$ under $\phi$. 

The argument above shows that each self-paired $G$-orbital $\Psi$ of $V(q)$ should contain $((\infty, L), (0, N))$ for some $N: y = \l^q x$ which is the image of $L$ under some $\phi$ above such that $\l^{p^{kr}} = \l^{q}$ for some $k$, and vice versa. Since $0 \not \in L$ and $\infty \not \in N$, $\Psi$ is compatible with $V(q)$ (taking for example $L' = N'$ to be the unique line through $\infty$ and $0$ in (A5)) and hence gives rise to the $G$-flag graph $\Ga(U_H(q), V(q), \Psi)$. Moreover, all $G$-flag graphs of $U_H(q)$ are of this form. 

We now prove that $\Ga(U_H(q), V(q), \Psi)$ above is isomorphic to $\Ga_{r,\l}(q)$. Let $\bu_1 = (a_1, b_1, c_1)$ and $\bu_2 = (a_2, b_2, c_2)$ be isotropic vectors with $\la \bu_1 \ra \ne \la \bu_2 \ra$. If an element of $G$ maps $\infty, 0$ to $\la \bu_1 \ra, \la \bu_2 \ra$ respectively, then it must be of the form $\phi_{\bu_0, i}: x' = a_0x^{p^{ir}}+a_1 y^{p^{ir}}+a_2 z^{p^{ir}}, y' = b_0x^{p^{ir}}+b_1 y^{p^{ir}}+b_2 z^{p^{ir}}, z' = c_0x^{p^{ir}}+c_1 y^{p^{ir}}+c_2 z^{p^{ir}}$ for some $\bu_0 = (a_0, b_0, c_0) \in V(3, q^2)$ and $0 \le i < 2e/r$. Scaling when necessary, we may assume that $\phi_{\bu_0, i}$ maps $(0, 1, 0), (0, 0, 1)$ to $\bu_1, \bu_2$ respectively, so that $\b(\bu_1, \bu_2) = \b((0, 1, 0), (0, 0, 1)) = 1$. Denote by $A$ the matrix of $\phi_{\bu_0, i}$ with columns $\bu_0, \bu_1$ and $\bu_2$. Since $\phi_{\bu_0, i} \in G$, we have $A^T D \bar{A} = D$, or equivalently $\b(\bu_0, \bu_0) = -1, \b(\bu_0, \bu_1) = 0, \b(\bu_0, \bu_2) = 0$. By Lemma \ref{lem:soln} these conditions are satisfied by precisely $q+1$ vectors $\bu_0 = (a_0, b_0, c_0)$ that differ by scalar multiples. Thus the arcs of $\Ga(U_H(q), V(q), \Psi)$ are precisely those $((\la \bu_1\ra, L_1), (\la \bu_2 \ra, L_2))$ such that $\bu_1$ and $\bu_2$ are as above, $\bu_0$ is a solution to (\ref{eq:rho}) with $\mu = 1$, and $L_1, L_2$ are respectively the images of $L, N$ under $\phi_{\bu_0, i}$ for some $i$. On the other hand, since $L$ has equation $x=z$, one can verify that $L_1$ has equation (\ref{eq:l1}). Similarly, since $N$ has equation $y = \l^q x$, $L_2$ has equation (\ref{eq:l2}). Therefore, $\Ga(U_H(q), V(q), \Psi) \cong \Ga_{r,\l}(q)$.
\qed
\end{proof}

The neighbourhood in $\Ga_{r,\l}(q)$ of a subset of $V(q)$ is defined as the union of the neighbourhoods of its vertices in $\Ga_{r,\l}(q)$.
As a consequence of Theorem \ref{thm:char}, the neighbourhood of each $B(\s) \in \BB$ in $\Ga_{r,\l}(q)$ is equal to the set of flags $\{(\t, N) \in V(q): \s \not \in N\}$ of $U_{H}(q)$ whose lines do not pass through $\s$.

\section{Another quotient of the unitary graphs}  
\label{sec:2nd}

Denote by $C(L)$ the set of flags of $U_{H}(q)$ with line-entry $L$. Let
\begin{equation}
\label{eq:C}
\CC = \{C(L):\;\mbox{$L$ is a line of $U_{H}(q)$}\}.
\end{equation}
Obviously, $\CC$ is a partition of $V(q)$ with block size $q+1$. Unlike $\Ga_{r,\l}(q)_{\BB} \cong K_{q^3 + 1}$, the quotient graph $\Ga_{r,\l}(q)_{\CC}$ of $\Ga_{r,\l}(q)$ relative to $\CC$ is not a complete graph. This section is devoted to combinatorial properties of $\Ga_{r,\l}(q)_{\CC}$ in relation to $\Ga_{r,\l}(q)$. 

If $\l \ne 1$, denote
\begin{equation}
\label{eq:l}
\eta = \left(\frac{1 - \l}{\l}\right)^{q+1},\;\; l_{r,\l}(q) = \frac{|\la \psi^r \ra|}{|\la \psi^r \ra_{\eta}|}.
\end{equation} 
Then $l_{r,\l}(q)$ is the size of the $\la \psi^r \ra$-orbit on $\FFF_{q^2}$ containing $\eta$. Straightforward computation yields $\la \psi^r \ra_{\l} \le \la \psi^r \ra_{\eta}$ and hence $l_{r,\l}(q)$ is a divisor of $k_{r,\l}(q)$. The following is the main result in this section. (The {\em lexicographic product} of a graph $\Sigma$ with a graph $\Delta$ is defined to have vertex set $V(\Si) \times V(\Delta)$ such that $(v, w), (v', w')$ are adjacent if and only if either $v, v'$ are adjacent in $\Si$ or $v = v'$ and $w, w'$ are adjacent in $\Delta$.)

\begin{theorem}
\label{thm:2ndquotient}
Let $q=p^e$ be a prime power and $r \ge 1$ a divisor of $2e$. Let $G = \PGU(3,q) \rtimes \la \psi^r \ra$. Suppose $\l \in \FFF_{q^2}^{*}$ such that $\l^q$ belongs to the $\la \psi^r \ra$-orbit on $\FFF_{q^2}$ containing $\l$. Denote $\Ga = \Ga_{r,\l}(q)$, $k^* = k_{r,\l}(q)$ and $l^* = l_{r,\l}(q)$. Then the following hold:
\begin{itemize}
\item[\rm (a)] the valency of $\Ga_{\CC}$ is equal to $q(q-1)$ if $\l=1$, $q(q^2-1)l^*$ if $\l \ne 1$ and $\l + \l^q \ne 1$, and $(q+1)(q^2-1)$ if $\l + \l^q = 1$;
\item[\rm (b)] the number of blocks of $\CC$ containing at least one neighbour of a fixed vertex of $\Ga$ is equal to $q(q-1)$ if $\l=1$, $q(q^2-1)l^*$ if $\l \ne 1$ and $\l + \l^q \ne 1$, and $q(q^2-1)$ if $\l + \l^q = 1$;
\item[\rm (c)] if $\l=1$, then $\Ga$ is isomorphic to the lexicographic product of $\Ga_{\CC}$ and the empty graph of $q+1$ vertices; if $\l \ne 1$ and $\l + \l^q \ne 1$, then $\Ga$ is a multicover of $\Ga_{\CC}$ and for adjacent $C(L_1), C(L_2) \in \CC$ the valency of the bipartite subgraph of $\Ga$ induced on $C(L_1) \cup C(L_2)$ is equal to $k^*/l^*$; if $\l + \l^q = 1$, then $\Ga$ is an almost multicover of $\Ga_{\CC}$ and the valency of this bipartite graph (excluding the two isolates) is equal to $k^*$. 
\end{itemize}
\end{theorem}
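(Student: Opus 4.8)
\medskip\noindent\textbf{Proof plan.}
The plan is to read off all three statements from the structure of the bipartite subgraph of $\Ga:=\Ga_{r,\l}(q)$ between two adjacent blocks of $\CC$. Since $\Ga$ is $G$-symmetric (Theorem~\ref{thm:char}(a)) and $\CC$ is $G$-invariant, the quotient $\Ga_{\CC}$ is again $G$-symmetric; so $G_L$ (the setwise stabiliser of a line $L$) is transitive on $C(L)$ and on the $\CC$-neighbours of $C(L)$, and every arc of $\Ga$ has stabiliser of order $(2e/r)/k^{*}$ in $G$. I would fix $L:x=z$ and the edge $(\infty,L)\sim(0,N)$, $N:y=\l^{q}x$, from the proof of Theorem~\ref{thm:char}, and set $P_0:=L\cap N$ in $\PG(2,q^2)$. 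The first point is a dichotomy: writing an arbitrary arc of $\Ga$ as in Definition~\ref{def:ugraph} with the canonical normalisations $\b(\bu_1,\bu_2)=1$ and $\b(\bu_0,\bu_0)=-1$ (available by that proof), one solves (\ref{eq:l1})--(\ref{eq:l2}) to get $L_1\cap L_2=\la\bu_0+\l^{qp^{ir}}\bu_1+\bu_2\ra$, and then, using $\bu_0\perp\bu_1,\bu_2$ and $\b(\bu_1,\bu_2)=\b(\bu_2,\bu_1)=1$, the value $\b(\bu_0+\l^{qp^{ir}}\bu_1+\bu_2,\,\bu_0+\l^{qp^{ir}}\bu_1+\bu_2)$ equals $-1+(\l+\l^{q})^{p^{ir}}$, which vanishes iff $\l+\l^{q}=1$. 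Hence the two lines of every arc of $\Ga$ meet at an absolute point when $\l+\l^{q}=1$ and are skew in $U_{H}(q)$ otherwise; in particular $P_0$ is an absolute point of $L$ in the former case, while in the latter no flag of $C(L)$ has point-entry $P_0$.

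For an absolute point $\s$ of $L$ let $c_\s$ be the number of neighbours of $(\s,L)$ lying on $N$; as these are permuted transitively by $G_{\s,L,N}$ and each has arc-stabiliser of order $(2e/r)/k^{*}$, one has $c_\s=|G_{\s,L,N}|\cdot k^{*}r/(2e)$ whenever $c_\s>0$. Every element of $G_{\s,L,N}$ fixes $P_0$, and from the explicit $\PGU(3,q)_{\infty,L}$ of the proof of Theorem~\ref{thm:char} (of order $q(q^2-1)$) one checks that it is transitive on the non-absolute points of $L$ with point-stabiliser the group $\Lambda$ of $q+1$ unitary homologies of axis $L$ (here $\l+\l^{q}\ne1$ forces the homology condition), and acts as in Lemma~\ref{lem:stab}(b) about an absolute point of $L$. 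Three cases: \emph{(a)} if $\l+\l^{q}=1$, then $P_0$ is absolute and $\ne\s$, $G_{\s,L,N}=G_{\s,P_0,L,N}$, and regularity of $\PGU(3,q)_{\s,P_0}$ ($=\PGU(3,q)_{\s,P_0,L}$, order $q^2-1$) on the $q^2-1$ lines $\ne L$ through $P_0$ gives $|G_{\s,L,N}|=2e/r$, so $c_\s=k^{*}$; moreover $(P_0,L)$ and $(P_0,N)$ are the only vertices of $C(L)$, $C(N)$ with no neighbour across (their neighbours have lines avoiding $P_0$), so $\Ga$ is an almost multicover of $\Ga_{\CC}$ with bipartite valency $k^{*}$. \emph{(b)} if $\l=1$, then $P_0=\la1,1,1\ra$ is non-absolute and $L^{\perp}=\la1,1,0\ra$ lies on $N:y=x$, so every element of $\Lambda$ fixes $N$, as does every power of $\psi^{r}$ (since $N$ is defined over $\FFF_p$); hence $|G_{\s,L,N}|=(q+1)(2e/r)$ and $c_\s=q+1$ (recall $k^{*}=1$), so $C(L)$ is completely joined to each adjacent block, i.e.\ $\Ga$ is the lexicographic product of $\Ga_{\CC}$ with the empty graph on $q+1$ vertices. \emph{(c)} if $\l\ne1$ and $\l+\l^{q}\ne1$, then $P_0$ is non-absolute, $L^{\perp}\notin N$, $\Lambda$ moves $N$, and $\PGU(3,q)_{\s,L,N}=1$, so $G_{\s,L,N}$ embeds into $\la\psi^{r}\ra$; one identifies its image as $\la\psi^{r}\ra_{\eta}$ (see below), whence $|G_{\s,L,N}|=(2e/r)/l^{*}$ and $c_\s=k^{*}/l^{*}$. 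In cases (b) and (c), applying the unitary homologies of centre $P_0$ — which fix $N$ and are transitive on the absolute points of the secant line $L$ — to the edge $(\infty,L)\sim(0,N)$ shows every vertex of $C(L)$ has a neighbour on $N$; so the bipartite graph between adjacent blocks is regular of the stated degree and $\Ga$ is a multicover of $\Ga_{\CC}$, proving the multicover clauses of (c).

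The valencies then fall out of a double count. Writing $V$ for the valency of $\Ga_{\CC}$ and $E$ for the number of edges of $\Ga$ between two adjacent blocks of $\CC$ (independent of the pair, by $G$-symmetry of $\Ga_{\CC}$), counting edges incident with $C(L)$ gives $VE=(q+1)k^{*}q(q^2-1)$; with $E=(q+1)^2,\ (q+1)k^{*}/l^{*},\ q\,k^{*}$ in cases (b),(c),(a) respectively this yields $V=q(q-1),\ q(q^2-1)l^{*},\ (q+1)(q^2-1)$, which is (a). (In case (a) one may instead note that by the dichotomy every $\CC$-neighbour of $C(L)$ is a line meeting $L$, and $\PGU(3,q)_{L}$ is transitive on the $(q+1)(q^2-1)$ such lines — it is transitive on the points of $L$ and, for fixed $\s\in L$, transitive on the $q^2-1$ lines $\ne L$ through $\s$ — so they are precisely the neighbours.) For (b): in cases (b),(c), a (multi)cover has each vertex with a neighbour in every adjacent block, so the count equals $V$; in case (a), $(\s,L)$ has no neighbour in $C(N)$ exactly when $N$ passes through $\s$, i.e.\ for the $q^2-1$ lines $N\ne L$ through $\s$, so the count is $(q+1)(q^2-1)-(q^2-1)=q(q^2-1)$.

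The substantive step — and where I expect the main obstacle — is case (c): identifying the image of $G_{\s,L,N}$ in $\la\psi^{r}\ra$ with $\la\psi^{r}\ra_{\eta}$. Concretely, a twist $g\psi^{jr}$ with $g\in\PGU(3,q)_{\s,L}$ fixes $N$ iff $g$ carries the line through $P_j$ and $0$ to the line through $P_0$ and $0$, where $P_j=\la1,\l^{qp^{jr}},1\ra$; since $\PGU(3,q)_{\s,L}$ is transitive modulo $\Lambda$ on the non-absolute points of $L$, this is possible iff the line joining $L^{\perp}$ to the relevant $\Lambda$-image of $0$ meets $N$ in an absolute point, and a cross-ratio computation in $\FFF_{q^2}$ reduces this last condition to $\eta^{p^{jr}}=\eta$. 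The inclusion $\la\psi^{r}\ra_{\l}\le\la\psi^{r}\ra_{\eta}$ recorded before the theorem is only the one-sided shadow of this; getting the exact equality (rather than merely a bound) via the cross-ratio identity is the crux of the argument.
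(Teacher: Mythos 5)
You organise the proof around the quantity $c_\s=|\Ga((\s,L))\cap C(N)|$, computed as $|G_{\s,L,N}|\cdot k^*r/(2e)$, so everything hinges on determining $|G_{\s,L,N}|$ in the three cases. Your treatment of the cases $\l=1$ and $\l+\l^q=1$ is essentially sound (and pleasantly more geometric than the paper's), but in the central case $\l\ne 1$, $\l+\l^q\ne 1$ you do not actually establish the key fact that the image of $G_{\s,L,N}$ in $\la\psi^r\ra$ is exactly $\la\psi^r\ra_{\eta}$: you offer only the unverified sketch that ``a cross-ratio computation in $\FFF_{q^2}$ reduces this last condition to $\eta^{p^{jr}}=\eta$'', with the relevant $\Lambda$-coset condition left vague, and you yourself flag this identification as the crux and the expected obstacle. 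The recorded containment $\la\psi^r\ra_{\l}\le\la\psi^r\ra_{\eta}$ gives only one inclusion, so without the exact criterion none of the case-(c) conclusions (quotient valency $q(q^2-1)l^*$, bipartite valency $k^*/l^*$, the block count in (b)) is proved. This is precisely the content of the paper's Lemma~\ref{lem:2linestab} (Case 2, Claims 2.1--2.2), where a direct matrix computation pins down the admissible twists $\psi^{jr}$ as exactly those satisfying (\ref{eq:lambda}), i.e.\ $\eta^{p^{jr}}=\eta$; your proposal has no substitute for that computation, so the proof is incomplete at its most substantive point.

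A second, smaller gap: your formula $c_\s=|G_{\s,L,N}|k^*r/(2e)$ is valid only when $c_\s>0$, and you verify $c_\s>0$ for all $\s$ only in the cases $\l=1$ and $\l\ne1,\l+\l^q\ne1$ (via the homologies with centre $P_0$). In the case $\l+\l^q=1$ you know only the single edge $(\infty,L)\sim(0,N)$, so the almost multicover claim (exactly one isolate per block) is not yet established; it can be repaired in the same spirit by using the $q$ elations with centre $P_0$ and axis the tangent line $P_0^{\perp}$, which fix both $L$ and $N$ and act regularly on the $q$ absolute points of $L\setminus\{P_0\}$ (in the paper this is delivered by $|G_{L,N}|=2qe/r$ versus $|G_{\infty,L,N}|=2e/r$). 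Likewise, in that case the equality $|G_{\s,L,N}|=2e/r$ needs a word on why the projection to $\la\psi^r\ra$ is onto (e.g.\ transitivity of $\PGU(3,q)$ on triples consisting of two absolute points and a second line through one of them), since your regularity argument only shows the $\PGU(3,q)$-part is trivial. Apart from these points, your dichotomy computation $\b(\bu_0+\l^{qp^{ir}}\bu_1+\bu_2,\cdot)=(\l+\l^q)^{p^{ir}}-1$ and the final double count are correct and constitute a genuinely different organisation from the paper's Lemmas~\ref{lem:1edge}--\ref{lem:2linestab}, but as written the case $\l\ne1$, $\l+\l^q\ne1$ is missing its essential step.
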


We need the following two lemmas in the proof of Theorem \ref{thm:2ndquotient}.  

\begin{lemma}
\label{lem:1edge}
Under the assumption of Theorem \ref{thm:2ndquotient}, let $d \in \FFF_{q^2}$ be such that $d + d^q = 1$ and $d \ne 1-\l$. Let $L: x = 0$ and $N: (1-\l)x + y +  (1-\l-d)z = 0$ be lines of $U_{H}(q)$. Then $(\infty, L)$ and $(\tau, N)$ are adjacent in $\Ga_{r,\l}(q)$, where $\tau = \la -1, d, 1 \ra$. 
\end{lemma}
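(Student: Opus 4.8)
The plan is to verify the adjacency directly from Definition~\ref{def:ugraph} by producing the data it requires. Take $\bu_1=(0,1,0)$ as a representative of $\infty$ and $\bu_2=(-1,d,1)$ as a representative of $\tau$; then $\tau\in X$ because $(-1)^{q+1}=1=d\cdot1+1\cdot d^q=d+d^q$, using the hypothesis on $d$. It then remains to find a nonisotropic $\bu_0=(a_0,b_0,c_0)\in V(3,q^2)$ orthogonal to both $\bu_1$ and $\bu_2$, together with an integer $0\le i<2e/r$, so that with $(a_1,b_1,c_1)=\bu_1$, $(a_2,b_2,c_2)=\bu_2$ and $(a_0,b_0,c_0)=\bu_0$, equation~(\ref{eq:l1}) is the equation of $L$ and equation~(\ref{eq:l2}) is the equation of $N$.

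First I would pin down $\bu_0$. The condition $\b(\bu_0,\bu_1)=0$ forces $c_0=0$, and then $\b(\bu_0,\bu_2)=0$ forces $a_0+b_0=0$, which makes $\bu_0=(1,-1,0)$ the natural candidate; one checks immediately that it is orthogonal to $\bu_1$ and $\bu_2$ and that $\b(\bu_0,\bu_0)=-1\ne0$, so $\bu_0$ is nonisotropic. Substituting $\bu_1=(0,1,0)$, $\bu_2=(-1,d,1)$, $\bu_0=(1,-1,0)$ into~(\ref{eq:l1}) and expanding the $3\times3$ determinant along its first row gives the equation $x=0$, i.e.\ $L$; so~(\ref{eq:l1}) produces $L$ for this $\bu_0$, independently of $i$.

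Substituting the same three vectors into~(\ref{eq:l2}) and expanding along the first column gives the line
\begin{equation}
\label{eq:1edge-check}
(1-\mu)\,x+y+(1-\mu-d)\,z=0,\qquad \mu:=\l^{q p^{ir}}.
\end{equation}
Since $y$ occurs with coefficient $1$ both here and in the given equation of $N$, the two lines coincide exactly when $\mu=\l$, that is, when $\l^{q p^{ir}}=\l$. So the lemma reduces to choosing $i$ in the range $0\le i<2e/r$ with $\l^{q p^{ir}}=\l$, and this is where the standing hypothesis on $\l$ is used: since $\l^q$ lies in the $\la\psi^r\ra$-orbit on $\FFF_{q^2}$ containing $\l$, Remark~(b) after Definition~\ref{def:ugraph} gives $\l=\l^{q p^{jr}}$ for some integer $j$; and since $x^{p^{2e}}=x$ for all $x\in\FFF_{q^2}$, the element $\l^{q p^{jr}}$ depends only on $j$ modulo $2e/r$, so we may take $i\in\{0,1,\dots,2e/r-1\}$ with $\l^{q p^{ir}}=\l$. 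With this $i$, equation~(\ref{eq:l2}) is the equation of $N$, and therefore $(\infty,L)$ and $(\tau,N)$ are adjacent in $\Ga_{r,\l}(q)$.

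The determinant expansions above are routine, so the one step I would write out carefully is the last: extracting an index $i$ in the required range $0\le i<2e/r$ from the orbit condition on $\l$. (The hypothesis $d\ne1-\l$ is not used in establishing the adjacency; it merely ensures $1-\l-d\ne0$, i.e.\ that $N$ does not pass through $0=\la 0,0,1 \ra$, which is what will matter later in the proof of Theorem~\ref{thm:2ndquotient}.)
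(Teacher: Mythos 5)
Your proposal is correct and follows essentially the same route as the paper: the paper's proof consists precisely of the choices $\bu_0=(1,-1,0)$, $\bu_1=(0,1,0)$, $\bu_2=(-1,d,1)$, $\mu=1$ in (\ref{eq:rho}) and the exponent $i$ with $\l^{qp^{ir}}=\l$ (written there as $i=(2e/r)-k$ where $\l^{p^{kr}}=\l^q$), and you have simply carried out the determinant verifications that the paper leaves to the reader. Your closing observations, including that $d\ne 1-\l$ is not needed for the adjacency itself, are accurate.
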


\begin{proof}
Choose $\bu_0 = (1, -1, 0), \bu_1 = (0, 1, 0), \bu_2 = (-1, d, 1), \mu = 1$ and $i = (2e/r) - k$ in Definition \ref{def:ugraph} and (\ref{eq:rho}). 
\qed
\end{proof}

\begin{lemma}
\label{lem:2linestab}
Under the condition of Lemma \ref{lem:1edge}, the following hold for $G = \PGU(3,q) \rtimes \la \psi^r \ra$:
\begin{itemize}
\item[\rm (a)] $|G_{L}| = \frac{2q(q+1)(q^2-1)e}{r}$
\item[\rm (b)] $|G_{L, N}| = \left\{ 
\begin{array}{lr}
\frac{2(q+1)^2 e}{r}, & \mbox{if $\l = 1$}\\ [0.3cm]
\frac{2(q+1)e}{l^* r}, & \mbox{if $\l \ne 1$ and $\l + \l^q \ne 1$}\\ [0.3cm]
\frac{2qe}{r}, & \mbox{if $\l + \l^q = 1$}
\end{array}
\right.$
\item[\rm (c)] $|G_{\infty, L, N}| = \left\{ \begin{array}{lr}
\frac{2(q+1)e}{r}, & \mbox{if $\l = 1$}\\ [0.3cm]
\frac{2e}{l^* r}, & \mbox{if $\l \ne 1$ and $\l + \l^q \ne 1$}\\ [0.3cm]
\frac{2e}{r}, & \mbox{if $\l + \l^q = 1$}
\end{array}
\right.$
\end{itemize}
\end{lemma}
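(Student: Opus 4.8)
The plan is to compute each of the three orders by explicit coordinatization, using the orbit–stabilizer lemma throughout and the fact that $\PGU(3,q)$ acts transitively on flags (and doubly transitively on points) of $U_H(q)$, so every relevant stabilizer can be computed at a conveniently chosen pair of lines. I would start with part (a). Since $\PGU(3,q)$ is transitive on the $q^2(q^2-q+1)$ lines of $U_H(q)$, the stabilizer $G_L$ has order $|G|/(q^2(q^2-q+1)) = (2e/r)\cdot|\PGU(3,q)|/(q^2(q^2-q+1))$; plugging in $|\PGU(3,q)| = q^3(q^3+1)(q^2-1)/\gcd(3,q+1)\cdot\gcd(3,q+1) = q^3(q^3+1)(q^2-1)$ and simplifying $q^3+1 = (q+1)(q^2-q+1)$ gives $|G_L| = 2q(q+1)(q^2-1)e/r$, which is (a). Alternatively I could verify this directly from the explicit form of $\PGU(3,q)_{\infty, L_0}$ in Lemma~\ref{lem:stab} together with an element swapping $\infty$ with another point of $L_0 = (x=0)$; either route is routine.

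For (b) I would fix $L: x = 0$ and parametrize the lines $N$ in each of the three cases, then identify the $G_L$-orbit of $N$ and apply orbit–stabilizer: $|G_{L,N}| = |G_L|/|G_L\text{-orbit of }N|$. The case distinction on $\l$ controls which geometric type $N$ has relative to $L$ — whether $L$ and $N$ are both tangent to $U_H(q)$ meeting at an absolute point, or one is a secant-type configuration, or they meet at a nonisotropic point — and the three values $2(q+1)^2 e/r$, $2(q+1)e/(l^*r)$, $2qe/r$ should match the three orbit lengths $q(q^2-1)/(q+1) \cdot (q+1) \cdot(\text{adjustments})$; concretely I expect the orbit of $N$ under $G_L$ to have length $q^2(q-1)/(q+1)$-type quantities that I will read off by counting how many lines of $U_H(q)$ lie in the same $G_L$-orbit as the specific $N$ from Lemma~\ref{lem:1edge}. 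The role of $\psi^r$ and of $l^* = l_{r,\l}(q)$ enters precisely in the middle case: the semilinear part of $G_L$ fixes $N$ iff it fixes the invariant $\eta = ((1-\l)/\l)^{q+1} \in \FFF_q$, so the contribution of $\la\psi^r\ra$ to $|G_{L,N}|$ is $|\la\psi^r\ra_\eta| = (2e/r)/l^*$, which is exactly where the $l^*$ in the denominator comes from. I would treat $\l = 1$ (so $N$ passes through $\infty$, making the two lines concurrent at an absolute point) and $\l+\l^q = 1$ (so, by the formula of $N$ in Lemma~\ref{lem:1edge}, the coefficient of $z$ vanishes and $N: (1-\l)x + y = 0$ also passes through $0 = \la0,0,1\ra$ but $L,N$ then meet at a nonisotropic point) as the two degenerate endpoints.

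For (c) I would use $|G_{\infty,L,N}| = |G_{L,N}|/(\text{length of the }G_{L,N}\text{-orbit of }\infty)$, where $\infty$ ranges over the absolute points lying on $L$; since $L$ has $q+1$ absolute points and $G_L$ acts on them, I need the $G_{L,N}$-orbit of $\infty$ inside this $(q+1)$-set. In the case $\l = 1$ the line $N$ also contains $\infty$, so $G_{L,N}$ fixes $\infty$ (it is the unique common absolute point, as $L \ne N$), giving orbit length $1$ and hence $|G_{\infty,L,N}| = |G_{L,N}| = 2(q+1)^2e/r$ — but the claimed value is $2(q+1)e/r$, so in fact $G_{L,N}$ must move $\infty$ within a set of size $q+1$; I will need to recheck which absolute point is common to $L$ and $N$ when $\l=1$ and recount accordingly. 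In the generic case the ratio $|G_{L,N}|/|G_{\infty,L,N}| = (q+1)$ in all three rows, which says $G_{L,N}$ is transitive on the $q+1$ absolute points of $L$; this transitivity is the geometric heart of the matter and I would prove it by exhibiting, inside $G_{L,N}$, a subgroup isomorphic to the stabilizer structure acting regularly (or transitively) on those points — most cleanly by noting that $G_{L,N}$ contains the pointwise-line-fixing elations or a cyclic group of order dividing $q+1$ coming from the $(q+1)$st roots of unity in the torus of Lemma~\ref{lem:stab}(b).

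The main obstacle I anticipate is (b): correctly distinguishing the three geometric configurations of the pair $(L,N)$ as $\l$ varies, and in the middle case correctly isolating the semilinear contribution so that the $l^*$ appears in the right place. This requires care because $l^* = l_{r,\l}(q)$ is defined via $\eta = ((1-\l)/\l)^{q+1}$ rather than via $\l$ itself, and one must verify $\la\psi^r\ra_\l \le \la\psi^r\ra_\eta$ (stated just before the theorem) to see that the element $\phi$ of Lemma~\ref{lem:1edge} — which requires $\l^{p^{kr}} = \l^q$ — can always be chosen, and then track exactly how many powers of $\psi^r$ stabilize $N$ once the linear part is pinned down. The degenerate case $\l+\l^q = 1$ needs separate attention because there $L$ and $N$ meet at a nonisotropic point rather than an absolute one, which changes the relevant point-stabilizer from Lemma~\ref{lem:stab}(a)-type to a different subgroup; I would handle it by a direct coordinate computation using the explicit $N$ from Lemma~\ref{lem:1edge} with the $z$-coefficient set to zero.
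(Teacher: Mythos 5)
Your part (a) is exactly the paper's argument and is fine, but the heart of the lemma is (b) and (c), and there your geometric setup is inverted, which breaks the rest of the plan. With $L: x=0$ and $N: (1-\l)x + y + (1-\l-d)z = 0$ from Lemma \ref{lem:1edge}, the two lines meet in $\PG(2,q^2)$ at $\la 0, \l+d-1, 1 \ra$, and this point is absolute precisely when $(\l+d-1)+(\l+d-1)^q = \l+\l^q-1 = 0$. So it is the case $\l+\l^q=1$ in which $L$ and $N$ share a point of $U_H(q)$ (this is Remark \ref{rem:1}(a)), while for $\l=1$ the line $N: y=dz$ does \emph{not} pass through $\infty$ and meets $L$ at a nonisotropic point. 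Your claims that for $\l=1$ the lines are concurrent at an absolute point, and that for $\l+\l^q=1$ the $z$-coefficient of $N$ vanishes (it cannot, since $d \ne 1-\l$ is assumed) and the lines meet nonisotropically, are all backwards. This is why your treatment of (c) at $\l=1$ runs into the contradiction you yourself flag and leave unresolved; and your assertion that $|G_{L,N}|/|G_{\infty,L,N}| = q+1$ ``in all three rows'' is false in the row $\l+\l^q=1$, where the ratio is $q$ — exactly because there $G_{L,N}$ must fix the unique common absolute point of $L$ and $N$ and so can only be transitive on the remaining $q$ absolute points of $L$.

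Beyond the misclassification, the counting that actually produces the three values and the factor $l^*$ is not carried out. Computing $|G_{L,N}|$ via $|G_L|/|G_L\mbox{-orbit of }N|$ is not easier than computing the stabilizer directly: the orbit length itself depends on $r$ and $\l$ through $\eta = ((1-\l)/\l)^{q+1}$ (it equals $q(q^2-1)l^*$ in the middle case), so ``reading it off'' from the geometric configuration alone is impossible, and your placeholder quantities ($q^2(q-1)/(q+1)$-type, ``adjustments'') do not amount to a proof. The paper's proof pins this down by explicitly parametrizing $G_L$: every $\zeta_{A,j} \in G_L$ has $A$ in one of three normal forms, and one then solves, for each form, the equations forcing $\zeta_{A,j}$ to fix $N$ (and then $\infty$). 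The outcome is that the admissible exponents $j$ are exactly those with $((1-\l)/\l)^{(q+1)(p^{jr}-1)}=1$ when $\l \ne 1$, $\l+\l^q \ne 1$ — there are $2e/(l^*r)$ of them — and for each admissible $j$ the number of admissible linear parts is $(q+1)^2$, $q+1$, or $q$ in the three respective cases; that product structure is where $2(q+1)^2e/r$, $2(q+1)e/(l^*r)$, $2qe/r$ come from. Your heuristic that ``the semilinear part fixes $N$ iff it fixes $\eta$'' points in the right direction for the middle case, but without the explicit solution of the fixing conditions (or an equivalent computation) neither the counts of linear parts nor the correct handling of the two degenerate cases is established, so the proof as proposed does not go through.
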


\begin{proof}
Since $G$ is transitive on the lines of $U_{H}(q)$, we have $|G_{L}| = |G|/q^2(q^2-q+1) = 2q(q+1)(q^2-1)e/r$ and this proves (a). 

To prove (b) and (c) we need more information about $G_L$. Consider an element $\zeta_{A, j}: x' = a_1 x^{p^{jr}} + a_2 y^{p^{jr}} + a_3 z^{p^{jr}}, y' = b_1 x^{p^{jr}} + b_2 y^{p^{jr}} + b_3 z^{p^{jr}}, z' = c_1 x^{p^{jr}} + c_2 y^{p^{jr}} + c_3 z^{p^{jr}}$ of $G$, where $A = \bmat{a_1 & a_2 & a_3\\b_1 & b_2 & b_3\\c_1 & c_2 & c_3}$ is the corresponding matrix. Since $\zeta_{A, j}$ maps $\infty, 0 \in L$ to $\la a_2, b_2, c_2 \ra, \la a_3, b_3, c_3 \ra$ respectively, $\zeta_{A, j}$ fixes $L$ if and only if $a_2 = a_3 = 0$. This together with $A^T D \bar{A} = D$ implies that $\zeta_{A, j} \in G_{L}$ if and only if
\bea
-a_1^{q+1} + b_1 c_1^q + c_1 b_1^q = -1 \label{eq:11} \\
b_1 c_2^q + c_1 b_2^q = 0 \label{eq:12} \\
b_1 c_3^q + c_1 b_3^q = 0 \label{eq:13} \\
b_2 c_2^q + c_2 b_2^q = 0 \label{eq:22} \\
b_2 c_3^q + c_2 b_3^q = 1 \label{eq:23} \\
b_3 c_3^q + c_3 b_3^q = 0 \label{eq:33}
\eea 
If none of $b_1, b_2$ and $b_3$ is equal to $0$, then $c_1/b_1 = -(c_2/b_2)^q = c_2/b_2$ and $c_1/b_1 = -(c_3/b_3)^q = c_3/b_3$. Call this common ratio $h$. Then $h+h^q = 0$ but $(h+h^q)b_2b_3^q = 1$ by (\ref{eq:23}). This contradiction shows that at least one of $b_1, b_2$ and $b_3$ is equal to $0$. On the other hand, by (\ref{eq:23}) at least one of $b_2, b_3$ is non-zero. If $b_1 \ne 0$, $b_2 = 0$ but $b_3 \ne 0$, then $c_2 = 0$ by (\ref{eq:12}), which contradicts (\ref{eq:23}). Similarly, if $b_1 \ne 0$,  $b_3 = 0$ but $b_2 \ne 0$, then $c_3 = 0$ by (\ref{eq:13}), which again contradicts (\ref{eq:23}). Therefore, we must have $b_1 = 0$, which implies $a_1^{q+1} = 1$ by (\ref{eq:11}) and $c_1 = 0$ by (\ref{eq:12}) and (\ref{eq:13}). Scaling when necessary, we may assume $a_1 = 1$ in the following.

If $b_2, b_3 \ne 0$, then by (\ref{eq:22}) and (\ref{eq:33}), we have $c_2 = ab_2$ and $c_3 = b b_3$ for some $a, b \in \FFF_{q^2}$ with $a+a^q=b+b^q=0$. From this and denoting $b_3$ by $c$, we get $b_2 = (a-b)^{-1}c^{-q}$ by (\ref{eq:23}). Hence $A$ is of the form 
\begin{equation}
\label{eq:form1}
\bmat{1 & 0 & 0\\0 & (a-b)^{-1}c^{-q} & c\\ 0 & a(a-b)^{-1}c^{-q} & bc}, 
\end{equation}
where $a, b, c \in \FFF_{q^2}$ such that $a+a^q=b+b^q=0, a \ne b$ and $c \ne 0$.
 
If $b_2 = 0$ but $b_3 \ne 0$, then $c_2 = b_3^{-q}$ by (\ref{eq:23}) and $(c_3/b_3)+(c_3/b_3)^q = 0$ by (\ref{eq:33}). In this case $A$ is of the form 
\begin{equation}
\label{eq:form2}
\bmat{1 & 0 & 0\\0 & 0 & b\\ 0 & b^{-q} & ab}, 
\end{equation}
where $a \in \FFF_{q^2}$ and $b \in \FFF^*_{q^2}$ such that $a+a^q=0$. 
Similarly, if $b_2 \ne 0$ but $b_3 = 0$, then $A$ is of the form 
\begin{equation}
\label{eq:form3}
\bmat{1 & 0 & 0\\0 & b & 0\\ 0 & ab & b^{-q}},
\end{equation}
where $a \in \FFF_{q^2}$ and $b \in \FFF^*_{q^2}$ such that $a+a^q=0$. 
Note that (\ref{eq:form1})-(\ref{eq:form3}) give rise to all elements $\zeta_{A, j}$ of $G_L$, where $0 \le j < 2e/r$. 

\delete{
$G_{L} = \{x' = x^{p^{jr}}, y' = (a+b^q)^{-1}c^{-q}y^{p^{jr}}+cz^{p^{jr}}, z' = a(a+b^q)^{-1}c^{-q}y^{p^{jr}}+bcz^{p^{jr}}: a, b \in \FFF_{q^2}, c \in \FFF_{q^2}^{*}, a+a^q=b+b^q=0, a+b^q \ne 0, 0 \le j < 2e/r\} \cup \{x' = x^{p^{jr}}, y' = bz^{p^{jr}}, z' = b^{-q}y^{p^{jr}}+abz^{p^{jr}}: a \in \FFF_{q^2}, b \in \FFF_{q^2}^{*}, a+a^q=0, 0 \le j < 2e/r\} \cup \{x' = x^{p^{jr}}, y' = by^{p^{jr}}, z' = aby^{p^{jr}}+b^{-q}z^{p^{jr}}: a \in \FFF_{q^2}, b \in \FFF_{q^2}^{*}, a+a^q=0, 0 \le j < 2e/r\}$.
}

\smallskip
{\bf Case 1:}~$\l = 1$. 
\smallskip

In this case $N$ has equation $y = dz$.

\smallskip
{\sc Claim 1.1:}~$G_{L, N}$ consists of those $\zeta_{A, j}$ such that $0 \le j < 2e/r$ and $A$ is in one of the following forms:
\begin{itemize}
\item[(i)] $A$ is given by (\ref{eq:form1}) with $a = -(bd-1)^{q+1} [(bd-1)^q d + d^{qp^{jr}}]^{-1} + b$ and $c^{q+1} = d^{(q+1)p^{jr}}(bd-1)^{-(q+1)}$, where $b \in \FFF_{q^2}$ is such that $b+b^q=0$ and $b \ne d^{-1} - d^{-(q+1)+p^{jr}}$;
\item[(ii)] $A$ is given by (\ref{eq:form2}) with $a = d^{-1} - d^{-(q+1)+p^{jr}}$ and $b^{q+1} = d^{q+1}$;
\item[(iii)] $A$ is given by (\ref{eq:form3}) with $a = d^{-1} - d^{-(q+1)+qp^{jr}}$ and $b^{q+1} = d^{-(q+1)(p^{jr}-1)}$.
\end{itemize}
\smallskip 

{\sc Claim 1.2:}~$G_{\infty, L, N}$ consists of those $\zeta_{A, j}$ such that
\begin{itemize}
\item[(iv)] $0 \le j < 2e/r$ with $d^{p^{jr}} \ne d$, and $A$ is given by (\ref{eq:form1}) with $a=0$, $b=(d-d^{p^{jr}})^{-1}$ and $c^{q+1} = (d - d^{p^{jr}})^{q+1}$; or
\item[(v)] $0 \le j < 2e/r$ with $d^{p^{jr}} = d$, and $A$ is given by (\ref{eq:form3}) with $a=0$ and $b^{q+1}=1$.
\end{itemize}
\smallskip

Since the numbers of elements of $G_{L, N}$ of types (i)-(iii) are $(q-1)(q+1)(2e/r), (q+1)(2e/r), (q+1)(2e/r)$, respectively, we obtain $|G_{L, N}| = 2(q+1)^2 e/r$ from Claim 1.1 and $|G_{\infty, L, N}| = 2(q+1)e/r$ from Claim 1.2.

\smallskip
\textit{Proof of Claims 1.1 and 1.2:}~One can verify that the image of $N$ under $\zeta_{A, j} \in G_L$ with $A$ given by (\ref{eq:form1}) has equation $[(a+b^q)bc^{q+1}+ad^{p^{jr}}]y = [(a+b^q)c^{q+1}+d^{p^{jr}}]z$. Thus $N$ is fixed by $\zeta_{A, j}$ if and only if $(a+b^q)c^{q+1}+d^{p^{jr}} = d[(a+b^q)bc^{q+1}+ad^{p^{jr}}]$, or equivalently $a = [d^{p^{jr}} - b^q c^{q+1}(bd-1)] [d^{p^{jr}+1}+c^{q+1}(bd-1)]^{-1}$. Plugging this into $a+a^q = 0$, and using $b+b^q=0$ and $d+d^q=1$, we obtain 
$c^{q+1} = d^{(q+1)p^{jr}}(bd-1)^{-(q+1)}$. (Note that $bd \ne 1$.) Hence $a = [(bd-1)^q + b d^{qp^{jr}}] [(bd-1)^q d + d^{qp^{jr}}]^{-1} = -(bd-1)^{q+1} [(bd-1)^q d + d^{qp^{jr}}]^{-1} + b$. One can see that $(bd-1)^q d + d^{qp^{jr}} = 0$ if and only if $b = d^{-1} - d^{-(q+1)+p^{jr}}$, and on the other hand this particular $b$ satisfies $b + b^q = 0$. Therefore, an element $\zeta_{A, j}$ of $G_L$ with $A$ given by (\ref{eq:form1}) fixes $N$ if and only if $a, b$ and $c$ are as in (i). Such an element of $G_{L, N}$ maps $\infty$ to $(0, (a+b^q)^{-1}c^{-q}, a(a+b^q)^{-1}c^{-q})$. Hence it fixes $\infty$ if and only if $a = 0$, or equivalently $b=(d-d^{p^{jr}})^{-1}$. Here we require $d^{p^{jr}} \ne d$. Note that this $b$ satisfies $b+b^q=0$ and is distinct from $d^{-1} - d^{-(q+1)+p^{jr}}$ because $d^{p^{jr}} \ne d - d^q$. (In fact, if $d^{p^{jr}} = d - d^q$, then $(d+ d^{q})^{p^{jr}} = d^{p^{jr}} + d^{qp^{jr}} = (d - d^q) + (d - d^q)^q = 0$, contradicting the assumption $d+d^q = 1$.) Note also that if $b = (d-d^{p^{jr}})^{-1}$ then $c^{q+1} = d^{(q+1)p^{jr}}(bd-1)^{-(q+1)} = (d - d^{p^{jr}})^{q+1}$. Thus $\zeta_{A, j} \in G_{L, N}$ with $A$ given by (\ref{eq:form1}) fixes $\infty$ if and only if it is as described in (iv). 

The image of $N$ under $\zeta_{A, j} \in G_L$ with $A$ given by (\ref{eq:form2}) has equation $(ab^{q+1}+d^{p^{jr}})y = b^{q+1}z$. Hence $N$ is fixed by such an element if and only if $b^{q+1} = d(ab^{q+1}+d^{p^{jr}})$, or equivalently $a = d^{-1} - b^{-(q+1)} d^{p^{jr}}$. Plugging this into $a+a^q = 0$ and using $d+d^q=1$, we obtain $b^{q+1} = d^{q+1}$ and consequently $a = d^{-1} - d^{-(q+1)+p^{jr}}$. Thus $\zeta_{A, j}$ fixes $N$ if and only if $A$ is as given in (ii). This element of $G_{L, N}$ maps $\infty$ to $(0, 0, b^{-q})$ and so cannot fix $\infty$. 

The image of $N$ under $\zeta_{A, j} \in G_L$ with $A$ given by (\ref{eq:form3}) has equation $(ab^{q+1}d^{p^{jr}}+1)y = b^{q+1}d^{p^{jr}}z$. Hence $N$ is fixed by such an element if and only if $b^{q+1}d^{p^{jr}} = d(ab^{q+1}d^{p^{jr}}+1)$, or equivalently $a = d^{-1} - b^{-(q+1)} d^{-p^{jr}}$. Plugging this into $a+a^q = 0$ and using $d+d^q=1$, we obtain $b^{q+1} = d^{-(q+1)(p^{jr}-1)}$ and so $a = d^{-1} - d^{-(q+1)+qp^{jr}}$. Hence this $\zeta_{A, j}$ fixes $N$ if and only if $A$ is as given in (iii). Such an element of $G_{L, N}$ maps $\infty$ to $(0, b, ab)$ and hence it fixes $\infty$ if and only if $a = 0$, that is, $d^{p^{jr}} = d$. Note that if $d^{p^{jr}} = d$ then $b^{q+1} = 1$ and so $A$ is as described in (v). 

So far we have completed the proof of Claims 1.1 and 1.2.  

\delete{
(d) Based on $G_{\infty, L, N}$ above, one can easily obtain that if $\l = 1$ then $G_{\infty, L, N, \t} = \{x' = x^{p^{jr}}, y' = y^{p^{jr}} + (d-d^{p^{jr}}) z^{p^{jr}}, z' = z^{p^{jr}}: 0 \le j < 2e/r\}$. Hence $|G_{\infty, L, N, \tau}| = 2e/r$ in this case.}

\smallskip
{\bf Case 2:}~$\l \ne 1$. 
\smallskip

Denote $\mu = 1 - \l$ so that $\mu \ne 0$ and $\mu \ne d$ by our assumption. 

Consider a typical element $\zeta_{A, j}$ of $G_{L}$ with $A$ given by (\ref{eq:form1}). Set $f = (a-b)^{-1}c^{-q}$ so that $f \ne 0$ and $a = b+c^{-q}f^{-1}$. Since $b+b^q = 0$, one see that $a+a^q = 0$ if and only if $c f^q + c^q f = 0$. Thus (\ref{eq:form1}) can be rewritten as
\begin{equation}
\label{eq:form1a}
\bmat{1 & 0 & 0\\0 & f & c\\ 0 & bf + c^{-q} & bc}, 
\end{equation}
where $b \in \FFF_{q^2}, c, f \in \FFF^*_{q^2}$ such that $b+b^q=0$ and $c f^q + c^q f = 0$. 

\smallskip
{\sc Claim 2.1:}~ 
$G_{L, N}$ consists of those $\zeta_{A, j}$ (where $0 \le j < 2e/r$) such that one of the following holds:
\begin{itemize}
\item[(i)] $A$ is given by (\ref{eq:form1a}) with $c = (\mu - d)^{p^{jr}} \mu^{-p^{jr}+1} [b(\mu-d)+1]^{-1}$ and $f = [c^{q+1}-c \mu^{p^{jr}-1} (\mu - d)]c^{-q}(\mu - d)^{-p^{jr}}$, where $b \in \FFF_{q^2}$ satisfies $b+b^q=0$, $b \ne -(\mu-d)^{-1}$, $b \ne (\mu -d)^{p^{jr}-(q+1)}\mu^{-(q+1)(p^{jr}-1)} - (\mu-d)^{-1}$, (note that $b \ne -(\mu-d)^{-1}$ is satisfied automatically when $\l + \l^q \ne 1$), and in addition $j$ satisfies  
\begin{equation}
\label{eq:lambda}
\left(\frac{1-\l}{\l}\right)^{(q+1)(p^{jr}-1)} = 1
\end{equation}
when $\l + \l^q \ne 1$;
\item[(ii)] $A$ is given by (\ref{eq:form2}) with $a = (\mu -d)^{p^{jr}-(q+1)}\mu^{-(q+1)(p^{jr}-1)} - (\mu-d)^{-1}$ and $b = \mu^{q(p^{jr}-1)}(\mu-d)^q$, and in addition $j$ satisfies (\ref{eq:lambda}) when $\l + \l^q \ne 1$; 
\item[(iii)] $A$ is given by (\ref{eq:form3}) with $a = (\mu -d)^{qp^{jr}-(q+1)}\mu^{-(q+1)(p^{jr}-1)} - (\mu-d)^{-1}$ and $b = (\mu(\mu-d)^{-1})^{q(p^{jr}-1)}$, and in addition  $j$ satisfies (\ref{eq:lambda}) when $\l + \l^q \ne 1$.
\end{itemize}
\smallskip 

{\sc Claim 2.2:}~ 
$G_{\infty, L, N}$ consists of those $\zeta_{A, j}$ (where $0 \le j < 2e/r$) such that one of the following holds:
\begin{itemize}
\item[(iv)] $A$ is given by (\ref{eq:form1a}) and $j$ is such that
$\mu^{(q+1)(p^{jr}-1)} \ne (\mu-d)^{p^{jr}-1}$,
where $b = \mu^{(q+1)(p^{jr}-1)}$ $[\mu^{(q+1)(p^{jr}-1)}(\mu-d)^q - (\mu - d)^{qp^{jr}}]^{-1}$, $c$ and $f$ are as in (i) above, and in addition $j$ satisfies (\ref{eq:lambda}) when $\l + \l^q \ne 1$; or
\item[(v)] $A$ is given by (\ref{eq:form3}) and $j$ satisfies $\mu^{(q+1)(p^{jr}-1)} = (\mu-d)^{p^{jr}-1}$, where $a = 0$, $b = (\mu(\mu-d)^{-1})^{q(p^{jr}-1)}$, and in addition $j$ satisfies (\ref{eq:lambda}) when $\l + \l^q \ne 1$. 
\end{itemize}
\smallskip 

One can see that the number of integers $0 \le j < 2e/r$ satisfying (\ref{eq:lambda}) is equal to $2e/l^* r$. 
Thus, by the claims above, if $\l + \l^q \ne 1$, then $|G_{L, N}| = 2(q+1)e/l^* r$ and $|G_{\infty, L, N}| = 2e/l^* r$; and if $\l + \l^q = 1$, then $|G_{L, N}| = 2qe/r$ and $|G_{\infty, L, N}| = 2e/r$.

\smallskip
\textit{Proof of Claims 2.1 and 2.2:}~The image of $N$ under $\zeta_{A, j} \in G_{L}$ with $A$ given by (\ref{eq:form1a}) has equation $c\mu^{p^{jr}}x+\{(\mu-d)^{p^{jr}}-bc^{q}[c-f(\mu-d)^{p^{jr}}]\}y+c^{q}[c-f(\mu-d)^{p^{jr}}]z = 0$. So $N$ is fixed by this element $\zeta_{A, j}$ if and only if 
\bea
bc^{q}[c-f(\mu-d)^{p^{jr}}] & \ne & (\mu-d)^{p^{jr}} \non \\
c\mu^{p^{jr}} & = & \mu \{(\mu-d)^{p^{jr}}-bc^{q}[c-f(\mu-d)^{p^{jr}}]\} \label{eq:1} \\ 
c^{q}[c-f(\mu-d)^{p^{jr}}] & = & (\mu - d) \{(\mu-d)^{p^{jr}}-bc^{q}[c-f(\mu-d)^{p^{jr}}]\}. \label{eq:2}
\eea
These hold if and only if 
\bea
b(\mu-d)+1 & \ne & 0 \label{eq:b} \\ 
f & = & [c^{q+1}-c \mu^{p^{jr}-1} (\mu - d)]c^{-q}(\mu - d)^{-p^{jr}} \label{eq:f} \\
c & = & (\mu - d)^{p^{jr}} \mu^{-p^{jr}+1} [b(\mu-d)+1]^{-1}. \label{eq:c}
\eea 

Given $c$ and $f$ above, by using $b + b^q = 0$, one can check that $c f^q + c^q f = 0$ holds if and only if 
\begin{equation}
\label{eq:cf}
\mu^{(q+1)(p^{jr}-1)} [(\mu-d)+(\mu-d)^q] = [(\mu-d)+(\mu-d)^q]^{p^{jr}}.
\end{equation}
Note that $(\mu-d)+(\mu-d)^q = 1 - \l - \l^q$ since $d+d^q=1$. If $\l + \l^q \ne 1$, then (\ref{eq:cf}) can be rewritten as 
$(1-\l)^{(q+1)(p^{jr}-1)} = (1-\l-\l^q)^{p^{jr}-1}$, which holds if and only if $\l^{(q+1)(p^{jr}-1)} = (1-\l-\l^q)^{p^{jr}-1}$. Using this one can further verify that in this case (\ref{eq:cf}) holds if and only if $j$ satisfies (\ref{eq:lambda}). Thus, if $\l + \l^q = 1$ then $c f^q + c^q f = 0$ holds for any $j$, and if $\l + \l^q \ne 1$ then $c f^q + c^q f = 0$ if and only if $j$ satisfies (\ref{eq:lambda}). 

Note that (\ref{eq:b}) holds automatically if $\l + \l^q \ne 1$. If $\l + \l^q = 1$ then $b_1 := -(\mu-d)^{-1}$ is a solution to $x+x^q = 0$ and (\ref{eq:b}) amounts to $b \ne b_1$. With $c$ and $f$ above we have: $f \ne 0$ $\Leftrightarrow$ $c^q \ne \mu^{p^{jr}-1} (\mu - d)$ $\Leftrightarrow$ $c \ne \mu^{q(p^{jr}-1)} (\mu - d)^q$ $\Leftrightarrow$ $(\mu - d)^{p^{jr}} \mu^{-p^{jr}+1} [b(\mu-d)+1]^{-1} \ne \mu^{q(p^{jr}-1)} (\mu - d)^q$ 
$\Leftrightarrow$ $b \ne b_2 := (\mu -d)^{p^{jr}-(q+1)}\mu^{-(q+1)(p^{jr}-1)} - (\mu-d)^{-1}$. One can verify that $b_2+b_2^q = 0$ if (\ref{eq:cf}) is satisfied. So $b$ can be any solution to $x+x^q = 0$ except $b_1$ and $b_2$ when $\l + \l^q = 1$ and $b_2$ when $\l + \l^q \ne 1$. 

The above shows that an element $\zeta_{A, j} \in G_{L}$ with $A$ given by (\ref{eq:form1a}) fixes $N$ if and only if $c$ and $f$ are given by (\ref{eq:c}) and (\ref{eq:f}), respectively, where $b \in \FFF_{q^2}$ satisfies $b+b^q=0$, $b \ne b_1, b_2$, and in addition $j$ satisfies (\ref{eq:lambda}) when $\l + \l^q \ne 1$. This element maps $\infty$ to $\la 0, f, bf+c^{-q} \ra$ and hence it fixes $\infty$ if and only if $bf+c^{-q} = 0$. Using (\ref{eq:f}), (\ref{eq:c}) and $b+b^q=0$, this condition is equivalent to that 
$b [\mu^{(q+1)(p^{jr}-1)}(\mu-d)^q - (\mu - d)^{qp^{jr}}] = \mu^{(q+1)(p^{jr}-1)}$, which can not be satisfied when $\mu^{(q+1)(p^{jr}-1)} = (\mu-d)^{p^{jr}-1}$ as $\mu \ne 0$. On the other hand, if $\mu^{(q+1)(p^{jr}-1)} \ne (\mu-d)^{p^{jr}-1}$, then this condition amounts to $b = b_3 := \mu^{(q+1)(p^{jr}-1)}[\mu^{(q+1)(p^{jr}-1)}(\mu-d)^q - (\mu - d)^{qp^{jr}}]^{-1}$. 
One can check that $b_3 + b_3^q = 0$ and $b_3 \ne b_1, b_2$. 

The image of $N$ under $\zeta_{A, j} \in G_{L}$ with $A$ given by (\ref{eq:form2}) has equation $b\mu^{p^{jr}}x + [-ab^{q+1}+(\mu-d)^{p^{jr}}]y+b^{q+1}z = 0$. So $N$ is fixed by this element if and only if $a \ne b^{-(q+1)}(\mu-d)^{p^{jr}}$, 
$b\mu^{p^{jr}} = \mu [-ab^{q+1}+(\mu-d)^{p^{jr}}]$ and
$b^{q+1} = (\mu - d) [-ab^{q+1}+(\mu-d)^{p^{jr}}]$.
From this and noting $a + a^q = 0$ one can verify that such an element $\zeta_{A, j}$ fixes $N$ if and only if $j$ satisfies (\ref{eq:cf}) and $a$ and $b$ are as in (ii) of Claim 2.1. (The proof above shows that (\ref{eq:cf}) is satisfied when $\l + \l^q = 1$ and is equivalent to (\ref{eq:lambda}) when $\l + \l^q \ne 1$.) Such an element maps $\infty$ to $\la 0, 0, b^{-q} \ra$ and so it never fixes $\infty$. 

The image of $N$ under $\zeta_{A, j} \in G_{L}$ with $A$ given by (\ref{eq:form3}) has equation $b\mu^{p^{jr}}x + [1-ab^{q+1}(\mu-d)^{p^{jr}}]y+b^{q+1}(\mu-d)^{p^{jr}}z = 0$. Hence $N$ is fixed by this element if and only if $a \ne b^{-(q+1)}(\mu-d)^{-p^{jr}}$, $b\mu^{p^{jr}} = \mu [1-ab^{q+1}(\mu-d)^{p^{jr}}]$ and $b^{q+1}(\mu-d)^{p^{jr}} = (\mu - d) [1-ab^{q+1}(\mu-d)^{p^{jr}}]$. From this and using $a + a^q = 0$ one can verify that such an element $\zeta_{A, j}$ fixes $N$ if and only if $j$ satisfies (\ref{eq:cf}) and $a$ and $b$ are as in (iii) of Claim 2.1. Since this element of $G_{L, N}$ maps $\infty$ to $\la 0, b, ab \ra$, it fixes $\infty$ if and only if $a = 0$, which holds if and only if $\mu^{(q+1)(p^{jr}-1)} = (\mu-d)^{p^{jr}-1}$.  

Up to now we have completed the proof of Claims 2.1 and 2.2 and hence the proof of the lemma.
\qed
\end{proof}

\begin{proof} \textbf{of Theorem \ref{thm:2ndquotient}}~
It is clear that $\CC$ is a $G$-invariant partition of $V(q)$. 
By Lemma \ref{lem:1edge} and using the notation there, $(\infty, L)$ and $(\tau, N)$ are adjacent in $\Ga = \Ga_{r,\l}(q)$. So by the orbit-stabilizer lemma the valency of $\Ga_{\CC}$ is given by $b(\CC):=|G_L|/|G_{L,N}|$, which together with Lemma \ref{lem:2linestab} yields (a). The number of vertices in $C(L)$ having neighbours in $C(N)$ is given by $k(\CC) := |G_{L,N}|/|G_{\infty, L, N}|$. 
This together with Lemma \ref{lem:2linestab} and the $G$-symmetry of $\Ga_{\CC}$ implies that $\Ga$ is a multicover of $\Ga_{\CC}$ if $\l = 1$ or $\l \ne 1$ and $\l + \l^q \ne 1$, and $k(\CC) = q$ if $\l + \l^q = 1$. Since $\CC$ has block size $q+1$, the number of blocks of $\CC$ containing at least one neighbour of $(\infty, L)$ is given by $b(\CC)k(\CC)/(q+1)$ and so (b) follows. Since the valency of $\Ga$ is $k^*q(q^2-1)$ by Theorem \ref{thm:char} and $k^* = 1$ when $\l=1$, for adjacent $C(L_1), C(L_2) \in \CC$, by (b) the valency of the bipartite subgraph of $\Ga$ induced on $C(L_1) \cup C(L_2)$ (excluding the isolates) is equal to $q+1$ if $\l = 1$, $k^*/l^*$ if $\l \ne 1$ and $\l + \l^q \ne 1$, and $k^*$ if $\l + \l^q = 1$. In particular, if $\l=1$, then $\Ga$ is isomorphic to the lexicographic product of $\Ga_{\CC}$ and the empty graph of $q+1$ vertices. 
\qed
\end{proof} 

\begin{remark}
\label{rem:1}
{\em (a) Using the notation in Lemma \ref{lem:1edge}, if $\l + \l^q = 1$, then $\la \bu \ra = \la 0, 1-\l-d, -1 \ra$ is a point of $U_H(q)$ which is in both $L$ and $N$. Since $(\la \bu \ra, L)$ and $(\la \bu \ra, N)$ are not adjacent in $\Ga_{r,\l}(q)$, by part (c) of Theorem \ref{thm:2ndquotient}, $(\la \bu \ra, L)$ ($(\la \bu \ra, N)$, respectively) is the only vertex in $C(L)$ ($C(N)$, respectively) without neighbour in $C(N)$ ($C(L)$, respectively). 

(b) In the special case when $\la \psi^r \ra_{\l} = \la \psi^r \ra_{\eta}$, we have $k^* = l^*$ and so $\Ga_{r,\l}(q)$ is a cover of $\Ga_{r,\l}(q)_{\CC}$ by part (c) of Theorem \ref{thm:2ndquotient}.}
\end{remark}

\delete{
\SM{It would be interesting to discuss various combinatorial and group-theoretic properties of unitary graphs, e.g. when a unitary graph is connected, diameter of the unitary graphs, when two unitary graphs are isomorphic, full automorphism groups of the unitary graphs, etc.}
}

\section{Proof of Theorem \ref{thm:class}}
\label{sec:class'n}


Suppose $\Ga, G, \BB$ and $\DD(\Ga, \BB)$ satisfy the conditions in Theorem \ref{thm:class}. 
By Lemma \ref{lem:flag graph}, $\Ga$ is isomorphic to a $G$-flag graph $\Ga(\DD, \Om, \Psi)$ for a $G$-doubly point-transitive linear space $\DD$. Since such a linear space is necessarily $G$-flag-transitive, $\Om$ must be the set of all flags of $\DD$, and $\Psi$ is a self-paired $G$-orbital of $\Om$ compatible with $\Om$. Since $\Om$ clearly satisfies (A2) and (A3), $\Om$ is feasible if and only if every point of $\DD$ is incident with at least three lines and, for distinct points $\s$ and $\t$, $G_{\s\t}$ is transitive on the lines incident with $\s$ but not $\t$.

Since $\DD$ is nontrivial and $G$ is almost simple with socle $N$, by \cite[Theorem 1]{Kantor85}, $\DD$ and $(G, N)$ are as in one of the following cases: 
\begin{itemize}
\item[(i)] $\DD = \PG(d-1,q)$, $N = \PSL(d, q)$, $d \ge 3$; 
\item[(ii)] $\DD$ is the Hermitian unital $U_{H}(q)$, $N = \PSU(3, q)$, $q > 2$ a prime power;
\item[(iii)] $\DD$ is the Ree unital $U_{R}(q)$ (see \cite{Lu}), $N =$ $^2G_2(q)$ is the Ree group, $q = 3^{2s+1} \ge 3$; 
\item[(iv)] $\DD=\PG(3,2)$, $N = A_7$. 
\end{itemize}
Note that in each case above, except $N =$ $^2G_2(3)$ ($\cong \PGammaL(2, 8)$), $N$ is also doubly transitive on the points of $\DD$.
 
\delete{
\begin{itemize}
\item[(B)] $G$ has a regular normal subgroup which is elementary abelian of order $v := p^d$, where $d \ge 1$ and $p$ is a prime. Identify $G$ with a group of affine transformations $\bv \mapsto \bv^g + \bc$ of $V(d, p)$, where $\bv, \bc \in V(d, p)$ and $g$ is in the stabilizer $G_{\bf 0}$ of ${\bf 0} \in V(d, p)$ in $G$. Then one of the following occurs:
\begin{itemize}
\item[(v)] $G \le \AGammaL(1, v)$, $\DD$ is an affine space (\SM{need detail information about $\DD$});
\item[(vi)] $\DD = \AG(n,q)$, $\SL(n, q) \unlhd G_{\bf 0}$, $v = q^n$; 
\item[(vii)] $\DD = \AG(n,q)$, $\Sp(n, q) \unlhd G_{\bf 0}$, $n \ge 4$, $v = q^n$; 
\item[(viii)] $\DD = \AG(2,p)$, $G_{\bf 0} \unrhd \SL(2,3)$ or $\SL(2,5)$, $v=p^2$, $p = 5, 7, 11, 19, 23, 29$, or $59$; 
\item[(ix)] $\DD = \AG(4,3)$, $G_{\bf 0}$ contains a normal extraspecial subgroup $E$ of order $2^5$, $v=3^4$;
\item[(x)] $\DD$ is the `exceptional nearfield plan' of order $9$ having $3^4$ points and $3^2 \cdot (3^2 + 1)$ lines, with $3^2$ points in each line and $3^2 + 1$ lines through each point (see \cite{Foulser} and \cite[pp.230]{Dembowski}), $G_{\bf 0}$ contains a normal extraspecial subgroup $E$ of order $2^5$, $v=3^4$;
\item[(xi)] $\SL(2, 5) \unlhd G_{\bf 0}$, $\DD$ is $\AG(2,9)$, the exceptional nearfield plan as above or $\AG(4,3)$, $v=3^4$;
\item[(xii)] $\DD = \AG(6,3)$, $G_{\bf 0} = \SL(2, 13)$, $v=3^6$;
\item[(xiii)] $\DD$ is the Hering affine plane of order $27$ having $3^6$ points and $3^3 \cdot (3^3 + 1)$ lines, with $3^3$ points in each line and $3^3 + 1$ lines through each point (see \cite{Hering69} and \cite[pp.236]{Dembowski}), $G_{\bf 0} = \SL(2, 13)$, $v=3^6$; 
\item[(xiv)] $\DD$ is one of the two Hering designs \cite{Hering} of order $90$ having $3^6$ points and $81 \cdot 91$ lines, with $9$ points in each line and $91$ lines through each point, $G_{\bf 0} = \SL(2, 13)$, $v=3^6$. 
\end{itemize}
}

\medskip
{\bf Case (i)}~In this case we have $\PSL(d, q) \unlhd G \le \PGammaL(d, q)$ and $\Ga$ is isomorphic to $\Ga^+(P; d, q)$ or $\Ga^{\simeq}(P; d, q)$ by \cite[Theorem 3.6]{Zhou-EJC}.  

\medskip
{\bf Case (ii)}~In this case we have 
$\PGU(3, q) \unlhd G \le \PGammaU(3, q)$ and $\Ga$ is a unitary graph by Theorem \ref{thm:char}. Note that, if $3$ divides $q+1$, then $G \ne \PSL(3, q)$ by Lemma \ref{lem:fbty}(b).

\medskip
{\bf Case (iii)}~It is well known that $N :=$ $^2G_2(q)$ is transitive on the flags of $U_{R}(q)$ and $\Aut(U_{R}(q)) \cong \Aut(N) \cong$ $N \rtimes C_{2s+1}$ (see e.g.~\cite{Goren}), where $C_{2s+1}$ is the cyclic group of order $2s+1$. Identify $\Aut(N)$ with $K := N \rtimes C_{2s+1}$ and let $N \unlhd G \le K$. Let $\s$ and $\t$ be distinct points of $U_{R}(q)$. Since $G$ and $K$ are doubly transitive on the points of $U_{R}(q)$, we have $G = NG_{\s\t}$ and $K = NK_{\s\t}$. Thus $N_{\s\t} \unlhd G_{\s\t}$, $N_{\s\t} \unlhd K_{\s\t}$, $\overline{G} := G/N \cong G_{\s\t}/N_{\s\t}$ and $\la \phi \ra \cong K/N \cong K_{\s\t}/N_{\s\t}$. It is known that $N_{\s\t}$ is isomorphic to the cyclic group of order $q-1$ (see e.g.~\cite[Section 7.7]{Dixon-Mortimer}). Since $|G/N|$ divides $|K/N| = 2s+1$, we may assume $|\overline{G}| = 2s'+1 \ge 1$ for some divisor $2s'+1$ of $2s+1$ so that  $|G_{\s\t}| = (q-1)(2s'+1)$. If (A4) is satisfied by $(G, U_{R}(q))$, that is, $G_{\s \t}$ is transitive on $\Om(\s) \setminus \{(\s, L)\}$, where $L$ is the line through $\s$ and $\t$, then $q^2 - 1$ divides $|G_{\s \t}|$, which occurs if and only if $q+1$ divides $2s'+1$. However, this cannot happen since $q+1$ is even but $2s'+1$ is odd. Thus $(G, U_{R}(q))$ does not satisfy (A4) and so the set of flags of $U_{R}(q)$ is not feasible with respect to $G$. Therefore, no $G$-symmetric graph satisfying the conditions of Lemma \ref{lem:flag graph} arises from $U_{R}(q)$.  

\delete{
Ref.: P.~Kleidman, The maximal subgroups of the Chevalley groups $G_2(q)$ with $q$ odd, the Ree group $^2G_2(q)$, and their automorphism groups, {\em J. Algebra} {\bf 117} (1) (1988), 30--71; Huppert B., Blackburn N.: Finite Groups III. Springer Verlag, Berlin (1982); L\"uneburg H.: Some remarks concerning the Ree groups of type (G2), J. Algebra 3 (1966), 256--259.
}  

\medskip
{\bf Case (iv)}~We may view $A_7$ as a subgroup of $\GL(4,2)$ ($\cong A_8$) generated by 
$$
\bmat{0&1&0&1\\0&1&1&0\\0&1&0&0\\1&0&1&1},\;\;
\bmat{1&1&1&1\\0&1&1&0\\1&1&0&0\\0&0&1&0}.
$$ 
Then $A_7$ is $2$-transitive on the vertex set $V(4,2) \setminus \{{\bf 0}\}$ of $\PG(3,2)$.    
The only possibility is $G = A_7$ because $\Aut(A_7) \cong S_7$ is not a subgroup of $\Aut(\PG(3,2))$.
A typical line of $\PG(3,2)$ is of the form $L(\s\t) := \{\s, \t, \s+\t\}$, where $\s$ and $\t$ are distinct points of $V(4,2) \setminus \{{\bf 0}\}$. Since $G_{\s\t} \cong A_4$ is transitive on $V(4,2) \setminus \{{\bf 0}, \s, \t, \s+\t\}$ (see e.g.~\cite{Atlas}), $G_{\s\t}$ is transitive on the flags of $\DD$ other than $(\s, L(\s\t))$ whose point-entry is $\s$. Consequently, the set $\Om$ of flags of $\DD$ is feasible with respect to $G$. 

We now give all self-paired $G$-orbitals $\Psi = ((\s, L), (\t, N))^G$ of $\Om$ compatible with $\Om$. 
 
Consider first the case when $L$ and $N$ have a common point for some (and hence all) $((\s, L), (\t, N)) \in \Psi$. By the $2$-transitivity of $G$ on $V(4,2) \setminus \{{\bf 0}\}$, we may fix distinct $\eta, \s \in V(4,2) \setminus \{{\bf 0}\}$ and $L = L(\s \eta)$, and take $\eta$ as the common point of $L$ and $N$. It is known that each involution of $G$ fixes exactly three points in $V(4,2) \setminus \{{\bf 0}\}$. Choose an involution $g$ of $G$ which fixes $\eta$ but not $\s$. Set $\t = \s^g$ and $N = L(\t\eta)$. Then $\Psi_1 := ((\s, L), (\t, N))^G$ is self-paired and compatible with $\Om$. Since $G_{\s \eta} \le G_{\s, L}$ and $G_{\s \eta} \cong A_4$ is transitive on $V(4,2) \setminus \{{\bf 0}, \s, \eta, \s+\eta\}$, $G_{\s, L}$ is transitive on the flags of $\DD$ with point-entry $\eta$ and up to isomorphism $\Ga(\DD, \Om, \Psi_1)$ is the unique $G$-flag graph of $\DD$ in the case when $L$ and $N$ have a common point. Moreover, since $\PG(3,2)$ is $G$-flag transitive, we see that two flags $(\s_1, L_1), (\t_1, N_1) \in \Om$ are adjacent in $\Ga_1 := \Ga(\DD, \Om, \Psi_1)$ if and only if $L_1$ and $N_1$ have a common point which is neither $\s_1$ nor $\t_1$. Since there are $7$ lines passing through each point and $L$ has two points other than $\s$, this graph has valency $2 \cdot 2 \cdot 6$, diameter 2 and girth 3.

Next we consider the case when $L$ and $N$ have no common point for any $((\s, L), (\t, N))$ $\in \Psi$. Since $\DD$ is $G$-flag-transitive and $G_{\s, L}$ is transitive on the points of $\DD$ not in $L$, in searching for such self-paired $G$-orbitals $\Psi = ((\s, L), (\t, N))^G$, we may fix $\s, \t$ and $L$ without loss of generality. For a fixed choice of $\s, \t$ and $L$, there are at most four such $\Psi$ because there are exactly four lines of $\DD$ through $\t$ which have no common points with $L$. Choosing  $\s = (1,0, 0, 0), \t = (0, 0, 1, 0)$ and $L = \{(1, 0, 0, 0), (0, 1, 0, 0), (1, 1, 0, 0)\}$, these four lines are $N_1 = \{\t, (0, 0, 0, 1), (0, 0, 1, 1)\}$, $N_2 = \{\t, (1, 0, 0, 1), (1, 0, 1, 1)\}$, $N_3 = \{\t, (0, 1, 0, 1), (0, 1, 1, 1)\}$ and $N_4 = \{\t, (1, 1, 0, 1), (1, 1, 1, 1)\}$. 
Using MAGMA \cite{BCP} we verified that $\Psi_2 := ((\s, L), (\t, N_1))^G = ((\s, L), (\t, N_2))^G$, $\Psi_3 := ((\s, L), (\t, N_3))^G$ and $\Psi_4 := ((\s, L), (\t, N_4))^G$ are all self-paired, and the corresponding graphs $\Ga_2 := \Ga(\DD, \Om, \Psi_2)$, $\Ga_3 := \Ga(\DD, \Om, \Psi_3)$ and $\Ga_4 := \Ga(\DD, \Om, \Psi_4)$ are pairwise nonisomorphic. 

Since $\PG(3,2)$ has $15$ points and $35$ lines with each line containing $3$ points and 
each point contained in $7$ lines, the graphs $\Ga_i$, $i = 1,2,3,4$, have $3 \cdot 35 = 105$ vertices. We have $|G_{\s, L}| = 24$ and $G_{\s, L, \t}$ consists of the identity together with the involution 
$\bmat{1&1&0&1\\0&1&0&0\\0&0&1&1\\0&0&0&1}$
fixing both $N_3$ and $N_4$ and swapping $N_1$ and $N_2$. Thus $\Ga_2, \Ga_3$ and $\Ga_4$ have valency $24, 12$ and $12$ respectively. Using MAGMA \cite{BCP} we obtain that $\Ga_2, \Ga_3, \Ga_4$ have (diameter, girth) = $(3, 3)$, $(3, 4)$, $(3, 3)$ respectively. 
\qed

In Case (iv) of the proof above, the pointwise stablizer $G_{(L)}$ of $L$ in $G$ is generated by 
$$
\bmat{1&0&1&1\\0&1&1&0\\0&0&1&0\\0&0&0&1},\;\;
\bmat{1&0&1&1\\0&1&0&0\\0&0&0&1\\0&0&1&1}
$$ 
and is regular on the 12 points of $\PG(3,2) \setminus L$. Hence the neighbourhood of $(\sigma, L)$ in $\Gamma_3$ ($\Gamma_4$, respectively) consists of the images of $(\tau, N_3)$ ($(\tau, N_4)$, respectively) by $G_{(L)}$; and the neighbourhood of $(\sigma, L)$ in $\Gamma_2$ consists of the union of the images of $(\tau,N_1)$ and $(\tau,N_2)$ by $G_{(L)}$.

\medskip
\noindent \textbf{Acknowledgment:}  
This work was partially supported by the Italian Ministero
dell'Istruzione, dell'Universit\`a e della Ricerca (MIUR), and by the
Gruppo Nazionale per le Strutture Algebriche, Geometriche e le loro
Applicazioni (GNSAGA) during a visit of Zhou to Universit\`a di Perugia in 2010.
Zhou was also partially supported by a Future Fellowship (FT110100629) of the Australian Research Council and a Shanghai Leading Academic Discipline Project (No. S30104).
  
{\small

}

\end{document}